\author{Benjamin Steinhurst} 
\address{Benjamin Steinhurst, Department of Mathematics and Computer Science, McDaniel College, Westminster, MD 21157}
 \email{bsteinhurst@mcdaniel.edu}
\author{Alexander Teplyaev}
\address{Alexander Teplyaev, Department of Mathematics, University of Connecticut, Storrs, CT 06269-1009}
     \email{teplyaev@uconn.edu}
\title[Spectral Analysis on Barlow-Evans Projective Limit Fractals]{Spectral Analysis on Barlow-Evans Projective Limit Fractals}
\subjclass{Primary: 81Q35; 
	Secondary: 
	28A80, 31C25, 34L10,  
47A10, 60J35, 81Q12}
\keywords{Spectrum of Laplacians, Projective limits, Inverse limits, Dirichlet forms}
\thanks{Research supported in part by the National Science Foundation, grant DMS-1613025.}
\newcommand{\C}[1]{\ensuremath{\mathcal{#1}}}
\newcommand{\B}[1]{\ensuremath{\mathbb{#1}}}
\newcommand{\F}[1]{\ensuremath{\mathfrak{#1}}}
\newcommand{\pis}[1]{\ensuremath{\pi^*_{#1}}}
\newcommand{\phis}[1]{\ensuremath{\phi^*_{#1}}}
\def\mG#1{\mu_{G_{#1}}}
\def\mF#1{\mu_{F_{#1}}}
\newtheorem{definition}{Definition}[section]
\newtheorem{theorem}{Theorem}[section]
\newtheorem{corollary}{Corollary}[section]
\newtheorem{lemma}{Lemma}[section]
\newtheorem{proposition}{Proposition}[section]
\newtheorem{remark}{Remark}[section]
\begin{document}
\maketitle


\begin{abstract}
We develop the foundation of the spectral analysis on Barlow-Evans projective limit fractals, or vermiculated spaces, which corresponds to symmetric Markov processes on these spaces. For some new examples, such as the generalized Laakso spaces and a Spierpinski P\^ate \`a Choux, one can develop a complete spectral theory, including the eigenfunction expansions that are analogous to Fourier series. Also, one can construct connected fractal spaces isospectral to the fractal strings of Lapidus and van Frankenhuijsen. Our work is motivated by recent progress in mathematical physics on fractals.  
 \tableofcontents
\end{abstract}

\section{Introduction}\label{sec:intro}
Analysis on projective, or inverse, limit spaces 
is an active area of current research \cite[and 
references 
therein]{Cheeger2013,Cheeger2015}. 
We study symmetric regular Dirichlet forms \cite{CF,FOT} on the fractal-like spaces $F_\infty$ constructed in \cite{BE04}. 
Our motivation primarily comes from applications in mathematical physics, see 
\cite[and 
references 
therein]{ABDTV12,Akk,ADT,ADT10,AR,PAR18,CTT}. In particular, \cite{PAR18} shows that explicit formulas for kernels of spectral operators, such as heat kernel and Schro\"odinger kernels,  can be obtained for these types of fractal spaces. 
The main results of our paper, Theorems  \ref{thm:specdecomp} and \ref{thm:specproj}, deal with the spectrum and the spectral resoltion of the Laplaican on the Barlow-Evans type projective limit space. 

Barlow and Evans in \cite{BE04}    used projective limits to produce a new class of state spaces for Markov processes. They also construct a projective limit Markov process by taking the projective limit of a sequence of compatible resolvent operators. We shall build, in a similar manner, a projective sequence of Dirichlet forms which we will then show have a non-degenerate limit. The projective sequences are built from a base Dirichlet space, that is a metric measure space equipped with a Dirichlet form together with its domain and a sequence of ``multiplier spaces.''  We will show that for reasonable base and index spaces one can develop a complete spectral theory of the associated Laplace operators, including formulas for spectral projections, utilizing the tools of Dirichlet form theory on the projective limit space, $F_{\infty}$. The characterization of the spectra of the Laplacians presented here is a generalization of those obtained previously by the first author for Laakso spaces in \cite{RS09,Steinhurst2010}. It is worth noting that the construction of $F_{\infty}$ in this paper is the same as that in \cite{BE04} and while the analytic apparatus is different (Dirichlet forms vs. resolvents) the constructions are in the same spirit. 

Given a measure space on which one has a Laplacian it is natural to study the spectrum. As the measure space becomes more complicated this task can become very difficult. On fractal spaces such as the Sierpinski gasket and carpet this problem has been extensively studied \cite{S,BB99,BBKT,K,K03}. For finitely ramified self-similar highly symmetric fractals a complete spectral analysis is  possible although rather complicated, see \cite{eigen1,eigen2} and references therein. 
Moreover, it is possible to extend this kind of spectral analysis to finitely ramified fractafolds, that is to metric measure spaces that have local charts from open sets of a reference fractal as opposed to $\mathbb{R}^{d}$. This is one way of obtaining new examples from old, including isospectral fractafolds, see \cite{S,Sf,IRS,RS-AT}. The projective limit construction provides yet another way of controllably obtaining new measure spaces and in this paper we examine how the spectral data transfers to the limit space from the base space.

The main goal of this paper is an understanding of the spectrum of a class of Laplacians. We have found it more straight forward to work in terms of the associated Dirichlet forms. This is particularly noticeable in Definition \ref{def:DF}, where the domain of a Dirichlet form is easier to describe than the domain of the corresponding Laplacian. 

We discuss in the final section of this paper how the projective limit construction can produce connected fractals which are isospectral to a given fractal string (see \cite{LvF} and references therein). 
This makes it possible to make a connection between Laplacians and spectra on fractal strings and on connected fractals in a natural way. 
Determining heat kernel estimates for Laplacians on fractal spaces has a long tradition but actual analysis of heat kernels on specific fractals is beyond the scope of this paper  (for references most relevant to our work see   instance, \cite{ABKT,BV1,BV2,Ba03,Ba04,BCG,BGK,BK,Steinhurst2013,G}). For example Laakso spaces have Gaussian heat kernel estimates while Sierpinski gasket-like fractals have sub-Gaussian estimates often depending on geometric conditions.  

One note of caution, our analysis of fractals defined as projective limits is an entirely intrinsic analysis on abstractly defined objects. Even in the simplest examples, Laakso spaces, the limit space is not bi-Lipschitz embeddable in any finite dimensional Euclidean space, \cite{Laakso2000}. 
However Laakso spaces provide a useful set of examples for a general theory which attempts to reprove the main results of differential geometry on possibly fractal spaces with  regular Dirichlet forms, see \cite{T1,T2,T3,T4,T5}. 

We begin with definitions in Section \ref{sec:def}. In Sections \ref{sec:projlimits} and \ref{sec:projlap} we provide the background on projective systems of measure spaces along with the limiting procedure for the Laplacians on each approximating measure space. Section \ref{sec:main} contains the main results of the paper which give a decomposition of the spectrum of the Laplacian on the limit space. Then in Section \ref{sec:examples} we describe three classes of examples of spaces that can be constructed with this method.

\section{Definitions}\label{sec:def}
The following definitions are 
essentially repeated from \cite{BE04}.

Let $F_0$ be a locally compact, second-countable, Hausdorff space with a $\sigma-$finite Borel measure $\mF0$. In addition we assume there is a sequence of compact, second-countable, Hausdorff spaces $G_i$ for $i \ge 1$ with Borel probability measures $\mG{i}$. The measures $\mF0$ and $\mG{i}$ are all assumed to be Radon measures with full support. 

We call $F_0$ the horizontal base space, and call $G_i$ the vertical multiplier spaces, see Figure \ref{fig:Laakso}.

Inductively we define a sequence of locally compact topological measure spaces and maps between them as follows (refer to Figure~\ref{fig:sequence}.) Suppose that $F_{i-1}$ for $i \ge 1$ is defined as a locally compact, second-countable, Hausdorff space and $B_i \subset F_{i-1}$ is a closed subset. 

\begin{figure}[t]
\begin{center} \ 
	\xymatrix{ 
		& \ar@{.}[d] & \ar@{.}[d] & & \\
		&F_1\times G_2 \ar[dr]_{\psi_2} \ar[r]^{\pi_2} & F_2 \ar[d]^{\phi_2}&&\\
		&F_0\times G_1 \ar[dr]_{\psi_1} \ar[r]^{\pi_1} & F_1\ar[d]^{\phi_1}&&\\ 
		& & F_0&&\\
	}
	\end{center}
	\caption{The sequence of spaces and the maps between them in Definition \ref{def:Fi}.}\label{fig:sequence}
\end{figure}
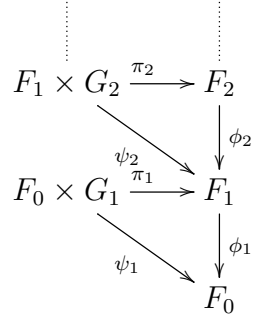

\begin{definition}\label{def:Fi}
Set
$$F_i = ((F_{i-1} \setminus B_i) \times G_i \bigcup B_i$$
and
$$\pi_i(x,g) = \left\{ \begin{array}{ll}(x,g) & if\ x \in F_{i-1} \setminus B_i \\ x & if\ x \in B_i. \end{array}\right. $$
The space $F_i$ is topologized by the map $\pi_i$, which means that a subset of $F_i$ is open if and only if its $\pi_i$-preimage is open in $F_{i-1}\times G_i$.
\end{definition}

The maps $\psi_i$ are the natural projections $F_{i-1} \times G_i \rightarrow F_{i-1}$ and define $\phi_i = \psi_i \circ \pi_i^{-1}\ : F_i \rightarrow F_{i-1}$. Alternatively $\phi_i$ can be defined by 
$$\left. \begin{array}{rl}\phi_i(x,g) =x & if\ \ x \in F_{i-1} \setminus B_i \\ \phi_i(x) =x & if\ \ x \in B_i. \end{array}\right.$$

\begin{definition}\label{def:muFi}Given $\mF0$  
We inductively define measures $\mF{i}$ on $F_i$ for $i \ge 1$ by
$$\mF{i}(\cdot) := (\mF{i-1} \times \mG{i})( \pi_i^{-1}(\cdot)).$$
The measure $\mF{i}$ is defined on the Borel $\sigma-$algebra generated by the above defined topology on $F_i$.
\end{definition}

The sequence of spaces and associated maps $\{F_i,G_i,\phi_i,\pi_i,\psi_i\}$ will be called a Barlow-Evans sequence. We also assume that the sequence of measures  $\mF{i}$ defined above is fixed. 
Note that $\phi_i$ is an open map because $\psi_i$ is open by virtue of it being a projection.

Since $F_i$ is locally compact, second-countable, and Hausdorff the measures $\mF{i}$ 
are also Radon measures with full support. 
Note that if $\mF0$ is a finite measure with mass $|\mF0|$ then all $\mF{i}$ have the same total mass since $|\mG{i}|=1$ for all $i \ge 1$.

For the rest of the paper a space $F_i$ will be a member of a Barlow-Evans sequence with all the associated components assumed to exist. For any $i = 0,1,\ldots$ we shall denote the $L^{2}$ norm on functions over $F_i$ by $\| \cdot \|_i$. Below $F_{\infty}$ and $\mF{\infty}$ will be defined and this convention will apply to them as well. These norms should not be confused for the $L^{p}$ norm which are not used in this paper except for $p=2$. 

If $f$ is a function on $F_i$ then $\pi_i^{*}f$ is a function on $F_{i-1} \times G_i$ defined by 
$$\pi_i^{*}f = f \circ \pi_i.$$
Similarly for $\phi_i^{*}$ and $\psi_i^{*}$. We shall need the following technical statement to properly describe the function spaces that will be used later.

\begin{lemma}\label{lem:compact}
Let $A \subset F_i$. Then $A$ is compact if and only if $\pi_i^{-1}(A)$ is compact. Also $B\subset F_{i-1}$ is compact if and only if $\phi_i^{-1}(B)$ is compact.
\end{lemma}

\begin{proof}
The result follows from the topologies of $F_i$ and $F_{i-1} \times G_i$ being related through $\pi_i$ and a basic compactness argument.
\end{proof}

We will use $C_0[X]$ to denote the space of continuous functions with compact support.

\begin{corollary}\label{cor:cptsupp}
For all $i \ge 1$, if $f$ is a function on $F_i$, then 
$$ f \in C_0[F_i]\ \text{if and only if}\ \pi_i^{*}f \in C_0[F_{i-1} \times G_i].$$
\end{corollary}

\begin{proof}
The equivalence of continuity is immediate from the quotient topology on $F_i$. The equivalence of the compact support claims follows from Lemma \ref{lem:compact}.
\end{proof}

Following Definitions~\ref{def:Fi} and \ref{def:muFi}, we   consider the spaces $L^{2}(F_i,\mF{i})$ with the norms $\| \cdot \|_i$ for all $i$ on which we now define quadratic forms, which will be shown in Theorem \ref{thm:EiDF} to be Dirichlet forms. 

\begin{definition}\label{def:DF}
Given a regular Dirichlet form $(\C{E}_0,\C{F}_0)$ on $L^{2}(F_0,\mF0)$ 
with a core $\F{F}_0\subset C_0[F_0]$
define inductively quadratic forms on $L^{2}(F_i,\mF{i})$ as follows. 

First, we inductively define the cores of continuous functions  
$$\F{F}_i = \left\{f \in C_0[F_i] \left|\ \begin{array}{c} \pis i f(x,g) = \sum_{k=1}^{n} f_k(x)h_k(g),\\ \ f_k \in   \F{F}_{i-1},\ h_k \in C(G_i)\end{array}\right. \right\}$$
and then we define 
\begin{equation}\label{e-key-product}
	\C{E}_i(f,h) := \int_{G_i} \C{E}_{i-1}(\pi^{*}_i f(\cdot , g),\pi^{*}_i h(\cdot , g))d\mG{i}(g), \hspace{1cm} f,g \in \F{F}_i.
\end{equation}
After that we define $\C{F}_i$ as the completion of $\F{F}_i$ in the norm 
$\sqrt{\C{E}_i(\cdot)}+\| \cdot \|_i$. Note that 
\begin{equation*}
\C{F}_i \subset \hat{\C{F}}_i = \left\{ f \in L^{2}(F_i,\mF{i}) \left| \begin{array}{c} \pi^{*}_i f(\cdot, g) \in \C{F}_{i-1}\ \text{for}\ \mG{i}-a.e.\ g \in G_i\ \text{and} \\ \int_{G_i} \C{E}_{i-1}(\pi^{*}_i f(\cdot , g), \pi^{*}_i f(\cdot , g))d\mG{i}(g) < \infty \end{array} \right.\right\}
\end{equation*}
and so $\C{E}_i$ is well defined on $\C{F}_i$ provided that $(\C{E}_i,\F{F}_i)$ is closable (see Theorem \ref{thm:EiDF}).

The measurability of $\C{E}_{i-1}(\pis{i}f(\cdot,g),\pis{i}f(\cdot,g))$ as a function of $g$ follows from the lower semi-continuity of the map $f \mapsto \C{E}_{i-1}(f,f)$ for $f \in \C{F}_{i-1}$.
\end{definition}

The relationship between $\C{F}_i$ and $\hat{\C{F}}_i$ will be a delicate one where in many instances it will be possible to prove equality. It is not obvious how to do so in complete generality though.

Note that $\pis{i} f$ is a function in two variables, one along $F_{i-1}$ and another along $G_i$. So this definition can be read as applying $\C{E}_{i-1}$ to $\pis{i}f$ for almost every element of $G_i$ and then integrating over $G_i$. Before examining the properties of $(\C{E}_{i},\C{F}_i)$ to ensure that it is really a Dirichlet form we first verify that it is well defined.


\begin{lemma}\label{lem:uniformdense}
If $(\C{E}_{i-1},\C{F}_{i-i})$ is regular for $i \ge 1$, then $\F{F}_i$ is a dense subalgebra  of $C_0[F_i]$.
\end{lemma}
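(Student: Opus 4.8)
The plan is to prove the statement by the Stone--Weierstrass theorem applied to the locally compact Hausdorff space $F_i$, after first isolating a convenient subalgebra inside $\F{F}_i \cap C_0[F_i]$. Define
\[
\mathcal{A} = \{ f \in C_0[F_i] : \pis i f(x,g) = \textstyle\sum_{k=1}^n f_k(x) h_k(g),\ f_k \in \C{F}_{i-1}\cap C_0[F_{i-1}],\ h_k \in C(G_i)\}.
\]
Since each $f_k$ here is bounded and continuous, every such $f$ lies in $L^2(F_i,\mF i)$, so $\mathcal{A} \subseteq \F{F}_i \cap C_0[F_i]$, and it suffices to show that $\mathcal{A}$ is uniformly dense in $C_0[F_i]$. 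First I would check that $\mathcal{A}$ is an algebra. It is plainly a linear space, and for a product one has $\pis i(ff') = (\pis i f)(\pis i f')$, which is again a finite sum of terms $(f_kf'_l)(x)(h_kh'_l)(g)$; here $h_kh'_l \in C(G_i)$, while $f_kf'_l \in \C{F}_{i-1}\cap C_0[F_{i-1}]$ because $\C{F}_{i-1}\cap L^\infty$ is closed under products for any Dirichlet form \cite{FOT} and the product of two functions in $C_0[F_{i-1}]$ again lies in $C_0[F_{i-1}]$.

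It then remains to verify the two hypotheses of Stone--Weierstrass: that $\mathcal{A}$ separates the points of $F_i$ and vanishes at no point. Note that a function of the product form descends through $\pi_i$ to a genuine function on $F_i$ precisely when $g \mapsto \pis i f(x,g)$ is constant for every $x$ in the collapsed set $B_i$; in particular $\pis i f(x,g)=u(x)h(g)$ defines an element of $C_0[F_i]$ as soon as either $h$ is constant or $u$ vanishes on $B_i$. To separate two points of $F_i$ whose $F_{i-1}$--coordinates differ, I would take $h \equiv 1$ and $f = \phis i u = u \circ \phi_i$ with $u \in \C{F}_{i-1}\cap C_0[F_{i-1}]$; since $\C{E}_{i-1}$ is regular, $\C{F}_{i-1}\cap C_0[F_{i-1}]$ is uniformly dense in $C_0[F_{i-1}]$ and hence separates the points of $F_{i-1}$, which separates the corresponding points of $F_i$ and simultaneously supplies a function nonzero at any prescribed point. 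To separate two points $(x,g_1)$ and $(x,g_2)$ lying in the same fiber over a point $x \in F_{i-1}\setminus B_i$, I would choose $h \in C(G_i)$ with $h(g_1)\ne h(g_2)$ (possible since $G_i$ is compact Hausdorff) together with $u \in \C{F}_{i-1}\cap C_0[F_{i-1}]$ satisfying $u(x)\ne 0$ and $u \equiv 0$ on $B_i$; then $u(x)h(g)$ descends to a function in $\mathcal{A}$ separating the two points.

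The one nontrivial point, and the step I expect to be the main obstacle, is producing the function $u$ in this last case: I need an element of $\C{F}_{i-1}\cap C_0[F_{i-1}]$ that is nonzero at $x$ yet vanishes \emph{identically} on the closed set $B_i$, whereas mere uniform density only yields functions that are small on $B_i$, not exactly zero. This is where I would invoke the fact that a regular Dirichlet form admits cutoff functions: for a relatively compact open neighborhood $V$ of $x$ with $\overline V \cap B_i = \emptyset$ there exists $u \in \C{F}_{i-1}\cap C_0[F_{i-1}]$ with $0 \le u \le 1$, $u(x)=1$, and $\operatorname{supp} u \subseteq V$ \cite{FOT}, so that $u$ vanishes on $B_i$ as required (this same construction also realizes the nonvanishing of $\mathcal{A}$ at every point). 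With point separation and nonvanishing established, Stone--Weierstrass gives that $\mathcal{A}$, and \emph{a fortiori} $\F{F}_i \cap C_0[F_i]$, is dense in $C_0[F_i]$.
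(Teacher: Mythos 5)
Your proof is correct and follows essentially the same route as the paper: both arguments apply the Stone--Weierstrass theorem to the algebra of product-form functions inside $\F{F}_i \cap C_0[F_i]$ (the paper restricts to a compact saturation of the support and extends by zero, while you invoke the locally compact version directly). Your write-up is in fact the more complete of the two, since you verify the point-separation hypothesis explicitly --- in particular the need, when separating two points of a fiber over $x \in F_{i-1}\setminus B_i$, for a function in $\C{F}_{i-1}\cap C_0[F_{i-1}]$ that is nonzero at $x$ but vanishes identically on $B_i$, obtained from the cutoff-function property of the regular form $\C{E}_{i-1}$ --- a step the paper asserts without detail.
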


\begin{proof}
Since $\F{F}_i$ consists of functions on $F_i$ whose pull back to $F_{i-1}\times G_i$ are continuous with compact support then by Cor \ref{cor:cptsupp} $\F{F}_i \subset C_0[F_i]$. Density follows from an application of the Stone-Weierstrass Theorem for locally compact spaces \cite[Chapter V, Cor. 8.3]{Conway}. Note that $\F{F}_i$ is an algebra of real valued functions so it remains only to show that for all $x \in F_i$ there exists a $f \in \F{F}_i$ such that $f(x) \neq 0$ and that $\F{F}_i$ separates points. Since $\C{E}_{i-1}$ is a regular Dirichlet form there exists $f \in \C{F}_{i-1} \cap C_0[F_{i-1}]$ so that $f(\phi(x)) > 0$ and $\phis{i}f \in \F{F}_i$.

Let $z_1,z_2 \in F_i$ be distinct points. Then there exists $(x_k,g_k) \in F_{i-1} \times G_i$ for $k=1,2$ such that $\pi(x_k,g_k) = z_k$ Because $z_1 \neq z_2$ it follows that $x_1 \neq x_2$ or $g_1 \neq g_k$, this is an inclusive ``or'' so it is possible that both coordinates a distinct. If $x_1 \neq x_2$ then there exists a $f\in \C{F}_{i-1} \cap C_0[F_{i-1}]$ such that $f(x_1) \neq f(x_2)$. In this case $\phis{i}f \in \F{F}_i$ is a separating function for the points $z_1$ and $z_2$. If $x_1 = x_2$ but $g_1 \neq g_2$ there are two sub-cases $x_k \in B_i$ or $x_k \not\in B_i$. If $x_k \in B_i$ then this forces $g_1=g_2$ so this case cannot happen by the structure of a Barlow-Evans sequence. Suppose then that $x_1 = x_2 \not \in B_i$ and $g_1 \neq g_2$. Such a combination of $x_k$ and $g_k$ imply that $x_1$ is in some open connected component of $F_{i-1} \setminus B_i$, call it $S$. By the regularity of $\C{E}_{i-1}$ there exists $f \in \C{F}_{i-1} \cap C_0[F_{i-1}]$ that is positive at $x_1$ and zero on $S^{C} \subset F_{i-1}$. By Urysohn's Lemma there exists $h \in C_0[G_i]$ such that $h(g_1) = 0$ and $h(g_2)=1$. Then $f(x)h(g)$ is zero on $B_i \times G_i$ so it is the lift of a continuous compactly supported function on $F_i$ which by construction is in $\F{F}_i$. 
\end{proof}

\begin{theorem}\label{thm:EiDF}
If $(\C{F}_0,\C{F}_0)$ is a regular Dirichlet form with core $\F{F}_0 \subset C_0[F_0]$, then   $(\C{E}_i,\F{F}_i)$ are closable forms whose closures are the regular Dirichlet forms $(\C{E}_i,\C{F}_i)$  for all $i \ge 0$. Moreover, if $(\C{E}_0,\C{F}_0)$ is strongly local,  then  $(\C{E}_i,\C{F}_i)$ are strongly local as well.  
\end{theorem}

The proof of this theorem is standard and is only sketched below. We also present some intuitive arguments illustrate the situation. 
One feature of a Barlow-Evans sequence that makes the proof of this theorem more complicated is that the domains $\C{F}_i$ are not nested as subsets of the same background set. 
The perspective of nested subspaces   will take the notation of Section \ref{sec:projlap}, and then it involves the use of the projective limit of the $F_i$ and $\mF{i}$, which are not necessary for the proof of this theorem.

\begin{proof}

We proceed by induction. The base case is the first hypothesis of the theorem. The hypothesis that $\F{F}_0$ is a core for $(\C{E}_0,\C{F}_0)$ is automatically satisfied if $(\C{E}_0,\C{F}_0)$ is a regular Dirichlet form.

Assume that $(\C{E}_{i-1},\F{F}_{i-1})$ is a closable bilinear form and that $(\C{E}_{i-1},\C{F}_{i-1})$ is its smallest closed extension, or closure, which  is a Dirichlet form. Definition \ref{def:DF} already defines $(\C{E}_i,\F{F}_i)$ as a bilinear, non-negative, and Markovian quadratic form, which is closable. This easily follows from the product structure in the right hand side of formula \eqref{e-key-product}, and the fact that 
restricting a closable form to a subspace is  a closable form. 
The
standard references are \cite{CF,FOT} and \cite[Section V.2]{BH} on the products of Dirichlet forms.

If $(\C{E}_{i-1},\C{F}_{i-1})$ is a regular Dirichlet form then by Lemma \ref{lem:uniformdense} $\F{F}_i$ is a   dense sub algebra of $C_0[F_i]$ in the uniform topology. So by standard arguments $\F{F}_i$ is a dense subset of $L^{2}(F_i,\mF{i})$ so $(\C{E}_i,\C{F}_i)$ is densely defined. Also by this lemma we have that $\C{F}_i\cap C_0[F_i]$ is uniformly dense in $C_0[F_i]$. Also by definition of $\C{F}_i$ as the closure in the $\sqrt{\C{E}_i(\cdot,\cdot)} + \| \cdot \|_i$ metric of $\F{F}_i \subset C_0[F_i]$ we have that $(\C{E}_i,\C{F}_i)$ is a regular Dirichlet form with $\C{F}_i \subset \hat{\C{F}}_i$.  

Assume that $(\C{E}_{i-1},\C{F}_{i-1})$ is strongly local. Let $u,v \in \F{F}_i$ have disjoint supports then $\pis{i}(u)(x,g)$ and $\pis{i}(v)(x,g)$ will also have disjoint supports. Consider 
$$ \C{E}_i(u,v) = \int_{G_i} \C{E}_{i-1}(\pis{i}u,\pis{i}v)\mG{i}.$$
Since $\pis{i}u$ and $\pis{i}v$ are continuous functions on $F_{i-1} \times G_i$ we know that for a given $g \in G_i$ that as functions of $x \in F_{i-1}$ that $\pis{i}u(x,g)$ and $\pis{i}v(x,g)$ have disjoint supports (Lemma \ref{lem:compact}). Thus for all $g \in G_i$ $\C{E}_{i-1}(\pis{i}u,\pis{i}v) = 0$ by locality and consequently $\C{E}_i(u,v) = 0$. Since $\F{F}_i$ is a core for $(\C{E}_i,\C{F}_i)$ it is local if $(\C{E}_{i-1},\C{F}_{i-1})$ was. See also   Theorem 3.1.2 and Problem 3.1.1 in \cite{FOT}.
\end{proof}

\begin{remark}
While it will be useful to be able to characterize elements of $\pis{i}\C{F}_i$ as elements of $\pis{i}\hat{\C{F}}_i$ it will be important to remember that these two spaces are not in general the same. 
\end{remark}

The following is a precursor to the nesting of $\C{F}_i$ that will be further developed in the next section. 

\begin{corollary}\label{cor:finitecompat}
The domains of the Dirichlet forms $(\C{E}_i,\C{F}_i)$ are compatible
 in the sense that 
$$\phis{i}\C{F}_{i-1} \subset \C{F}_i.$$
\end{corollary}

\begin{proof}
Let $f \in \C{F}_{i-1}$, then $\C{E}_i(\phis{i}f) = \int_{G_i}\C{E}_{i-1}(f)\mG{i} = \C{E}_{i-1}(f) < \infty.$ Also $\phis{i}f \in L^{2}(F_i)$ because $G_i$ is compact and $\mG{i}$ is a probability measure. 
\end{proof}



\section{Projective Limits}\label{sec:projlimits}

The construction that is considered in this paper is a means of constructing state spaces for  symmetric diffusions via projective limits. That is, taking limits along compatible sequences of topological spaces and producing a limit topological space. More work is required to construct compatible sequences of metrics, measures, and Dirichlet forms. Barlow and Evans \cite{BE04} considered this construction as a way to produce exotic state spaces for Markov processes. Then \cite{KKPSS} specialized Barlow and Evans' work to Laakso spaces \cite{Laakso2000}.

\begin{definition}\label{def:projlimits}
Let $\prod_{i=1}^{\infty} F_i$ have the product topology. For a Barlow-Evans sequence the projective limit $\lim_{\leftarrow} F_i$, denoted by $F_{\infty}$ is a subset of $\prod_{i=1}^{\infty} F_i$ with the subspace topology such that for any $(x)_{i=1}^{\infty} \in F_{\infty}$ $\phi_i(x_i) = x_{i-1}$ and the canonical projections $\Phi_j: \prod F_i \rightarrow F_j$ restrict to $F_{\infty}$ and have the consistency property:
$$ \phi_j \circ \Phi_j = \Phi_{j-1},\ j \ge 1.$$
\end{definition}

Note that the topology on $F_{\infty}$ is Hausdorff and second countable. It is also locally compact \cite[IX Sec 4]{BourbakiIntegrationII}. We now turn to defining a measure on $F_{\infty}$.

\begin{proposition}{\cite[IX Sec 4]{BourbakiIntegrationII}}\label{prop:projlimitmeas}
There exists a unique measure on $F_{\infty}$ denoted $\mF{\infty}$ if the masses of $\mF{i}$ are uniformly bounded. Then $\mF{\infty}$ satisfied 
\begin{equation}\label{eq:measconsist}
	 \mF{i}(A) = \mF{\infty}(\Phi^{-1}_i(A))
\end{equation}
for all $A$ that are $\mF{i}-$measurable. Further more, if the $\mF{i}$ are Radon measures so is $\mF{\infty}$. 
\end{proposition}

The existence and uniqueness claim in Theorem 2 in IX Section 4 in \cite{BourbakiIntegrationII}. The claim about the Radon property follows from Propositions 1-3.

\begin{corollary}
If $\mF{0}$ is $\sigma-$finite then there exists a unique $\mF{\infty}$ on $F_{\infty}$ which satisfies Equation \ref{eq:measconsist}.
\end{corollary}

\begin{proof}
Since $\mF{0}$ is $\sigma-$finite there exists a partition of $F_0$ such that each element of the partition has finite measure. By partitioning $F_0$ it follows that the lift of the partition to $F_i$ is also a partition where each piece  has finite mass sets then $(F_i,\mF{i})$ is a $\sigma-$finite measure space. Each member of the partition of $F_i$ has the same measure as the corresponding member of the partition of $F_0$, so the masses stay bounded in $i$. Apply Proposition \ref{prop:projlimitmeas} on each member of the partition starting at $F_0$ and then take $\mF{\infty}$ to be their sum.
\end{proof}

We shall often have probability measures on $F_i$ so that it will be possible to consider directly the limit measure space $(\lim_{\leftarrow}F_i,\mF{\infty})$ rather than using this Corollary. Note that the $\Phi^{*}_i$ are \B{R}-linear maps from Borel functions on $F_i$ to Borel functions on $F_{\infty}$. 

\begin{proposition}\label{prop:InftyUnifDense}
Let $clos_{uniform}$ represent the closure operation in the uniform norm then 
$$C_0[F_{\infty}] = clos_{uniform} \left\{ \bigcup_{i=0}^{\infty} \Phi^{*}_i C_0[F_i]\right\}.$$
\end{proposition}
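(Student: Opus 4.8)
The plan is to recognize the right-hand side as a subalgebra of $C_0(F_\infty)$ and then invoke the Stone--Weierstrass theorem in the form valid for locally compact Hausdorff spaces, in the same spirit as the proof of Lemma \ref{lem:uniformdense}. Throughout I would use two standing facts about the Barlow--Evans construction: that $F_\infty = \lim_{\leftarrow} F_i$ is locally compact Hausdorff, and that each projection $\Phi_i : F_\infty \to F_i$ (and each $\phi_j$) is continuous and \emph{proper}. Properness is where the compactness of the $G_j$ enters: over a compact $K \subseteq F_i$ the set $\Phi_i^{-1}(K)$ is cut out of $K$ together with a tail of the compact factors $G_{i+1}, G_{i+2}, \dots$, and is compact by Tychonoff.

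The inclusion $clos_{uniform}\{\bigcup_i \Phi_i^* C_0(F_i)\} \subseteq C_0(F_\infty)$ is the easy half. For $f \in C_0(F_i)$ the function $\Phi_i^* f = f\circ\Phi_i$ is continuous, and for each $\varepsilon>0$ the set $\{|\Phi_i^* f|\ge\varepsilon\} = \Phi_i^{-1}(\{|f|\ge\varepsilon\})$ is the proper preimage of a compact set, hence compact; thus $\Phi_i^* f \in C_0(F_\infty)$. Since $C_0(F_\infty)$ is uniformly closed, it contains the uniform closure of the union. The reverse inclusion is the density statement that requires Stone--Weierstrass. Setting $\C{A} = \bigcup_{i\ge 0} \Phi_i^* C_0(F_i)$, I would first verify that $\C{A}$ is in fact a subalgebra. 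Given $f = \Phi_i^* \tilde f$ and $g = \Phi_j^* \tilde g$ with $j\le i$, the consistency relations iterate to $\Phi_j = (\phi_{j+1}\circ\cdots\circ\phi_i)\circ\Phi_i$, so that $g = \Phi_i^*(\phis{ij}\tilde g)$ with $\phis{ij}\tilde g \in C_0(F_i)$ (again using properness of the $\phi$'s). Now both functions are pulled back through the single map $\Phi_i$, and their sum and product are $\Phi_i^*$ applied to $\tilde f + \phis{ij}\tilde g$ and $\tilde f\cdot\phis{ij}\tilde g$, which lie in $C_0(F_i)$. Hence $\C{A}$ is a subalgebra.

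It then remains to check the two hypotheses of the $C_0$ version of Stone--Weierstrass. For point separation, if $x=(x_i)\ne y=(y_i)$ in $F_\infty\subseteq\prod_i F_i$ then $x_i\ne y_i$ for some $i$; Urysohn's lemma on the locally compact Hausdorff space $F_i$ produces $f\in C_0(F_i)$ with $f(x_i)\ne f(y_i)$, and $\Phi_i^* f$ separates $x$ and $y$. For the nonvanishing condition, given $x\in F_\infty$ choose a compactly supported $f\in C_0(F_0)$ with $f(\Phi_0(x))\ne 0$; then $\Phi_0^* f(x)\ne 0$. Stone--Weierstrass then yields that $\C{A}$ is uniformly dense in $C_0(F_\infty)$, completing the second inclusion and hence the identity.

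The delicate point, which I expect to be the main obstacle, is exactly the properness of the projections $\Phi_i$ and $\phi_j$ — equivalently, the assertion that pullbacks land in $C_0$ rather than merely in the bounded continuous functions $C_b$ — together with the local compactness and Hausdorffness of $F_\infty$. These are precisely what make the $C_0$ form of Stone--Weierstrass (with its nonvanishing hypothesis) applicable, and they rest on the compactness of the $G_i$ and the topology of the Barlow--Evans limit. By contrast, the algebra structure of $\C{A}$ is routine once the consistency relations are used to lift two functions to a common level $F_i$.
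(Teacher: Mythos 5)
Your argument is correct, and it is worth noting that the paper itself offers no proof of this proposition at all: it is stated bare, and the only later justification is the remark (inside the regularity proof for $\mathcal{E}_{\infty}$) that the description of $C_0(F_\infty)$ ``is from the topology given by the inverse limit construction.'' So rather than taking a different route from the paper, you are supplying the route the paper leaves implicit. Your proof correctly isolates the one non-routine point: for the $C_0$ version of Stone--Weierstrass to apply (and indeed for the statement to be about $C_0$ rather than $C_b$), one needs $F_\infty$ to be locally compact Hausdorff and the maps $\phi_j$ and $\Phi_i$ to be proper. Your justification of properness is sound: $\phi_j^{-1}(A)=\pi_j(A\times G_j)$ is the continuous image of a compact set, hence compact (and closed, using the standing assumption that each $F_j$ is Hausdorff), and $\Phi_i^{-1}(K)$ is a closed subset of the Tychonoff product of the compact sets $\phi_{j,i}^{-1}(K)$. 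The reduction of two functions pulled back from different levels to a common level via the consistency relation $\Phi_{j}=\phi_{j+1}\circ\cdots\circ\phi_i\circ\Phi_i$ is exactly right and makes $\bigcup_i\Phi_i^*C_0(F_i)$ a genuine subalgebra; separation of points and nonvanishing follow from Urysohn on the (locally compact, second-countable, Hausdorff) $F_i$ as you say. The only cosmetic caveat is that the consistency relation is misprinted in Definition \ref{def:projsys} of the paper as $\phi_i\circ\Phi_i=\phi_{i-1}$; the relation you use, $\phi_i\circ\Phi_i=\Phi_{i-1}$, is the intended one. Your approach is also consonant with the Stone--Weierstrass argument the paper does write out in Lemma \ref{lem:uniformdense} for the finite levels, so it fits the paper's methods; what it buys beyond the paper's bare assertion is an explicit explanation of why pullbacks of $C_0$ functions remain $C_0$, which is precisely the point where compactness of the $G_i$ is used.
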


\begin{proof}
As in the proof of Lemma \ref{lem:uniformdense} using the Stone-Weierstrass Theorem. 
\end{proof}

\section{Projections and Laplacians}\label{sec:projlap}

Having constructed Dirichlet forms on the approximating spaces, $F_i$, in Section \ref{sec:def} we now turn to constructing a Dirichlet form over the limit space, $F_{\infty}$ which was constructed in Section \ref{sec:projlimits}. Recall that the $L^{2}(F_M,\mF{M})$ norm is denoted by $\| \cdot \|_M$ for $M=0,1,2,\ldots , \infty$. The existence of projective limits of Dirichlet spaces ($L^{2}$ space equipped with a Dirichlet form and its domain) is briefly discussed in \cite{BH}. We develop the existence for the sake of the accompanying notation which is then used to describe the decompositions in Theorem \ref{thm:fcnspacedecomp}. The decompositions rely on the specific structure of the equivalence relations used in defining a Barlow-Evans sequence and are not a general feature of projective systems of Dirichlet spaces. 

\begin{definition}
Given a Barlow-Evans sequence let $\C{E}_{\infty}$ be the quadratic form on $F_{\infty} = \lim_{\leftarrow} F_i$ defined by 
$$\C{E}_{\infty}(\Phi^{*}_i u, \Phi^{*}_i u) = \C{E}_i(u,u)$$
for all $u \in \C{F}_i$ for all $i \ge 1$. The domain of $\C{E}_{\infty}$ is 
$$\C{F}_{\infty}= clos \left\{ \bigcup_{i=0}^{\infty} \Phi^{*}_i \F{F}_i \right\}.$$
The closure is in the $\C{E}_{\infty}^{1/2} + \| \cdot \|_{\infty}$ metric. 
\end{definition}

As in Section \ref{sec:def} we must show that this definition is suitable. Specifically that $\C{E}_{\infty}$ is closable and that the minimal closed extension is $(\C{E}_{\infty},\C{F}_{\infty})$. In the manner of Section \ref{sec:def} we define 
$$\hat{\C{F}}_{\infty}= clos \left\{ \bigcup_{i=0}^{\infty} \Phi^{*}_i \hat{\C{F}}_i \right\}.$$
The possible equality of $\C{F}_{\infty}$ and $\hat{\C{F}}_{\infty}$ will not be addressed in any generality. For Laakso spaces it is known that they are the same, see \cite{Steinhurst2010} and Subsection \ref{ssec:Laakso}.

By Corollary \ref{cor:finitecompat}, the $\Phi_i^{*}\F{F}_i$ are increasing linear subspaces of $L^{2}(F_{\infty},\mF{\infty})$ and $\bigcup_{i \ge 0} \Phi^{*}_i\F{F}_i$ is a dense linear subspace of $L^{2}(F_{\infty},\mF{\infty})$. Notice that by the relationship $\Phi^{*}_i = \Phi_{i+1}^{*} \circ \phi_i^{*}$ we have that $\C{E}_{\infty}(\Phi^{*}_i u) = \C{E}_{\infty}(\Phi_{i+1}^{*} \circ \phi_i^{*} u)$ for all $u \in \F{F}_i$. From this we see that the quadratic form $(\C{E}_{\infty}, \bigcup_{i \ge 0} \Phi^{*}_i \F{F}_i)$ is well-defined.

\begin{theorem}\label{thm:InftyRegularity}
If $(\C{E}_0,\C{F}_0)$ is a regular Dirichlet form then the pair $(\C{E}_{\infty},\C{F}_{\infty})$ is a regular Dirichlet form. Furthermore, if $\C{E}_0$ is strongly local then $\C{E}_{\infty}$ is strongly local as well.
\end{theorem}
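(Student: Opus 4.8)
The plan is to transfer every Dirichlet-form axiom from the approximating forms $(\C{E}_i,\C{F}_i)$, which are regular by Theorem \ref{thm:localregularDF}, to the limit through the pullback maps $\Phi^*_i$, exploiting that each $\Phi^*_i$ is an isometry $L^2(F_i)\hookrightarrow L^2(F_\infty)$ (Proposition \ref{prop:muinfty}, equation (\ref{eq:mui})) that is simultaneously an algebra homomorphism commuting with normal contractions. First I would settle that $\C{E}_\infty$ is \emph{well defined} on $\C{D}:=\bigcup_i\Phi^*_i\C{F}_i$. The key is the consistency identity $\C{E}_i(\phis i u,\phis i u)=\C{E}_{i-1}(u,u)$ for $u\in\C{F}_{i-1}$: since $\phi_i=\psi_i\circ\pi_i^{-1}$ one computes $\pis i\phis i u=\psi^*_i u$, a function constant in the $G_i$ variable, so that $\C{E}_i(\phis i u,\phis i u)=\int_{G_i}\C{E}_{i-1}(u,u)\,d\mG i=\C{E}_{i-1}(u,u)$ because $\mG i$ is a probability measure; moreover $\phis i u\in\C{F}_i$ by the Corollary following Theorem \ref{thm:localregularDF}. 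Combined with $\Phi^*_{i-1}=\Phi^*_i\circ\phis i$ (from the consistency relation $\phi_i\circ\Phi_i=\Phi_{i-1}$ of Definition \ref{def:projsys}) and the injectivity of the isometries $\Phi^*_j$, this shows that any two representations of an element of $\C{D}$ yield the same value, so $\C{E}_\infty$ is unambiguous.

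With well-definedness in hand the algebraic axioms are immediate on $\C{D}$: bilinearity follows from $\B R$-linearity of $\Phi^*_i$, symmetry and non-negativity from those of each $\C{E}_i$, and the Markovian property from the fact that $\Phi^*_i$ commutes with every normal contraction $T$, so that $\C{E}_\infty(T\circ\Phi^*_iu,T\circ\Phi^*_iu)=\C{E}_i(T\circ u,T\circ u)\le\C{E}_i(u,u)=\C{E}_\infty(\Phi^*_iu,\Phi^*_iu)$; all of these persist under the form closure defining $\C{F}_\infty$ by the standard arguments of \cite{FOT}.

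The main obstacle is closedness. Here I would observe that, pushed forward by the isometries, the forms $\tilde{\C{E}}_i$ on the single Hilbert space $L^2(F_\infty)$, defined by $\tilde{\C{E}}_i(\Phi^*_iu,\Phi^*_iu)=\C{E}_i(u,u)$ with domain $\Phi^*_i\C{F}_i$, are each \emph{closed}, have increasing domains ($\Phi^*_{i-1}\C{F}_{i-1}\subset\Phi^*_i\C{F}_i$ by the nesting Corollary), and agree on overlaps (by the consistency identity above). Thus they form a monotone family of closed forms, and I would invoke the monotone convergence theorem for forms: the associated resolvents $(1+\tilde A_i)^{-1}$ increase to the resolvent of a self-adjoint operator, whose form is closed and coincides with $\C{E}_\infty$ on $\C{F}_\infty$. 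Equivalently, one checks closability of $(\C{E}_\infty,\C{D})$ by hand, showing that an $\C{E}_\infty$-Cauchy sequence $u_n\in\C{D}$ with $\|u_n\|_\infty\to0$ has vanishing energy, the bounded-level case reducing directly to the already-established closedness of a single $\C{E}_i$. This is exactly the place where the projective structure, rather than a generic projective system of Dirichlet spaces, is used.

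Finally, regularity and locality follow from a common core. Each $\C{E}_i$ is regular, so $\C{F}_i\cap C_0(F_i)$ is dense in both $C_0(F_i)$ and $\C{F}_i$; applying the isometric algebra homomorphisms $\Phi^*_i$ and taking the union, $\bigcup_i\Phi^*_i(\C{F}_i\cap C_0(F_i))$ is dense in $\C{F}_\infty$ in the form norm and, by the preceding Proposition identifying $C_0(F_\infty)$ with $clos_{uniform}\{\bigcup_i\Phi^*_iC_0(F_i)\}$, uniformly dense in $C_0(F_\infty)$, which gives regularity. For locality, if $\C{E}_0$ is local then so is every $\C{E}_i$ by Theorem \ref{thm:localregularDF}; since locality may be verified on the core $\C{D}\cap C_0(F_\infty)$, it suffices to note that disjointly supported $u=\Phi^*_i\bar u$ and $v=\Phi^*_i\bar v$ at a common level $i$ have $\bar u,\bar v$ with disjoint supports in $F_i$ (as $\Phi_i$ is surjective), whence $\C{E}_\infty(u,v)=\C{E}_i(\bar u,\bar v)=0$.
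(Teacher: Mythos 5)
Your proposal follows the paper's architecture for the algebraic axioms, regularity, and locality, but adds one genuinely useful step and substitutes a problematic one. The useful addition is the well-definedness check: you verify the consistency identity $\C{E}_i(\phis i u,\phis i u)=\C{E}_{i-1}(u,u)$ (via $\pis i\phis i u=\psi^*_iu$ being constant in the $G_i$ variable and $\mG i$ being a probability measure), so that the value assigned by Definition \ref{def:Einfinity} does not depend on the chosen representation $\Phi^*_iu$. The paper never makes this explicit, and the nesting Corollary after Theorem \ref{thm:localregularDF} is stated without proof, so this is a real improvement. Your regularity argument (common core, density transferred through the $\Phi^*_i$, diagonal sequences) and your locality argument (reduce to disjointly supported functions at a common finite level) are essentially the paper's, modulo the shared caveat that locality must be carried from the core to all of $\C{F}_\infty$ by approximations preserving disjointness of supports.

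The substantive issue is your primary route to closedness. Because the domains $\Phi^*_i\C{F}_i$ increase and the forms agree on overlaps, the extended-valued forms $\tilde{\C{E}}_i$ (set equal to $+\infty$ off their domains) form a \emph{decreasing} sequence of closed forms. For such a sequence the monotone convergence theorem does give strong resolvent convergence, but the limiting self-adjoint operator is associated with the largest closed form dominated by the pointwise infimum (the relaxation), and this coincides with the closure of $(\C{E}_\infty,\bigcup_i\Phi^*_i\C{F}_i)$ only if that pre-form is already known to be closable. So this route produces a closed form but does not by itself show that it agrees with $\C{E}_\infty$ on the union, nor that the abstract completion $\C{F}_\infty$ embeds in $L^2(F_\infty)$; closability is still the content to be proved. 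Your fallback is the paper's argument, but the phrase ``the bounded-level case reducing directly to the closedness of a single $\C{E}_i$'' skips the only nontrivial case, namely when the levels $M_n$ with $u_n\in\Phi^*_{M_n}\C{F}_{M_n}$ are unbounded. The paper handles this with a (terse) subsequence argument; to make your version complete, write $\C{E}_\infty(u_n)=\C{E}_\infty(u_n,u_n-u_m)+\C{E}_\infty(u_n,u_m)$, control the first term by Cauchy--Schwarz together with the Cauchy property and the uniform bound on $\C{E}_\infty(u_n)$, and push the second term down to the fixed level of $u_m$, where closedness of a single $\C{E}_M$ and $\|u_n\|\to0$ force it to vanish.
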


\begin{proof}
On $\bigcup_{i \ge 0} \Phi^{*}_i \F{F}_i$ the form $\C{E}_{\infty}$ is linear, positive, and has the Markovian property. Suppose for the moment that $(\C{E}_{\infty}, \bigcup_{i \ge 0} \Phi^{*}_i \F{F}_i)$ is closable. Linearity and positivity are maintained in the closure with respect to the $\C{E}_{\infty}^{1/2} + \| \cdot \|_{\infty}$ metric. By Theorem 3.1.1 of \cite{FOT} the Markovian property extends to the smallest closed extension of $(\C{E}_{\infty},\bigcup_{i \ge 0} \Phi^{*}_i \F{F}_i)$ which is $\C{F}_{\infty}$ by virtue of it being the closure in the metric induced by the form $\C{E}_{\infty}$ itself.

To show that $(\C{E}_{\infty},\bigcup_{i \ge 0} \Phi^{*}_i \F{F}_i)$ is closable, one can employ the standard monotonicity methods \cite[Theorem S.14, page 373]{RS80}.  To make this construction more concrete, note that all of the $G_i$ are compact probability spaces, and $\prod_{i=1}^{\infty} G_i$ is also a compact probability space with the product topology. For $u \in \bigcup_{i \ge 0} \Phi^{*}_i \F{F}_i$ there exists a $j \in \mathbb{N}$ such that $u = \Phi^{*}_j v$ for some $v \in \F{F}_j$. Let $g_j \in \prod_{i=1}^{j} G_i$ and $g_{j+} \in \prod_{i=j+1}^{\infty} G_i$. Since $\pi_1$ acts on $F_0 \times G_1 \times \cdots \times G_j$ by taking the first two coordinates and returning an elements of $F_1 \times G_2 \times \cdots \times G_j$ upon which $\pi_2$ has a similar action we can compose $\pi_i$ let $\pi_{j,1} = \pi_j \circ \cdots \circ \pi_1$. Then we can take advantage of the structure of Barlow-Evans sequence:
\begin{align*}
\C{E}_{\infty}(u) &= \C{E}_j(v)\\
& = \int_{\prod_{i=1}^{j}G_i} \C{E}_0(\pi_{j,1}^{*}(v)(x,g_j)d\mu_{\prod_{i=1}^{j} G_i}(g_j)\\
&= \int_{\prod_{i=j+1}^{\infty}G_i} \left(  \int_{\prod_{i=1}^{j}} \C{E}_0(\pi_{j,1}^{*}(v)(x,g_j)d\mu_{\prod_{i=1}^{j} G_i}(g_j) \right)\ d\mu_{\prod_{i=j+1}^{\infty} G_i} (g_{j+})\\
&= \int_{\prod_{i=1}^{\infty}G_i} \C{E}_0((\pi_{j,1}^{*}(v))'(x,g_j)\ d\mu_{\prod_{i=1}^{\infty}G_i}(g_j)
\end{align*}
where $(\pi_{j,1}^{*}(v))'(x,g_j)$ is $\pi_{j,1}^{*}(v)(x,g_j)$ extended to a function of $x$, $g_j$, and $g_{j+}$ by declaring it constant in $g_{j+}$. For functions in $\bigcup_{i \ge 0} \Phi^{*}_i\F{F}_i$ the composition $\pi_{\infty,1}^{*}$ eventually stabilizes at some finite $j$ so by the above we can write
$$\C{E}_{\infty}(u) =  \int_{\prod_{i=1}^{\infty}G_i} \C{E}_0(\pi_{\infty,1}^{*}(u)(x,g_{1+})) d\mu_{\prod_{i=1}^{\infty}G_i}(g_j).$$
This is an analogous definition for $\C{E}_{\infty}$ as was made for $\C{E}_i$ as constructed from $\C{E}_{i-1}$ in Theorem \ref{thm:EiDF}. 

By Theorem 3.1.2 \cite{FOT}, a local closable Markovian symmetric form $(\C{E},\C{F})$ on $L^{2}(X,\mu)$ has the local property on its smallest closed extension if it has a core that is a dense subalgebra of $C_0[X]$ and every compact set, $K$, has a pre compact open neighborhood, $G$, such that there exists $u \in \C{F}$ such that $u(x) = 1$ for all $x \in K$ and $u(x)=0$ for all $x \in X \setminus G$. The existence of such a core is exhibited by choosing it to be $\bigcup_{i \ge 0} \F{F}_i$ since all such functions are continuous by definition and as was remarked above it is a dense sub algebra of $C_0[F_{\infty}]$. Let $K \subset F_{\infty}$ be compact. Set $K' = \Phi_0(K) \subset F_0$. Since $(\C{E}_0,\C{F}_0)$ is a local regular Dirichlet form there exists an open set $G' \supset K'$ and $u' \in \C{F}_0$ such that $u'(x) = 1$ for $x \in K'$ and $u'(x) = 0$ for $x \in F_0 \setminus G'$. Let $G = \Phi^{*}_0(G')$ and $u = \Phi^{*}_0 u'$. $G$ is pre compact since $F_{\infty}$ is locally compact.
Because $K \subset \Phi^{-1}_0(K')$ we have $u(x) = 1$ for all $x \in K$ and similarly $u(x) = 0$for all $x \in F_{\infty} \setminus G$. As in 
Theorem~\ref{thm:EiDF}, 
Theorem 3.1.2 and Problem 3.1.1 of \cite{FOT} imply that if $(\C{E}_0,\C{F}_0)$ is strongly local then $(\C{E}_{\infty},\C{F}_{\infty})$ is as well.

The regularity of $(\C{E}_{\infty},\C{F}_{\infty})$ comes from the fact that $\C{F}_{\infty}$ is defined as the closure of a set of continuous functions and Lemma \ref{lem:FInftyUnifDense} which says that those continuous functions are also uniformly dense in $C_0[F_{\infty}]$. 
\end{proof}

Logically the following lemma ccomes before the above theorem. The only reason it is placed here is for the notation of $\pi_{\infty,1}$ discussed in the theorem's proof. 

\begin{lemma}\label{lem:FInftyUnifDense}
If $(\C{E}_0,\C{F}_0)$ is regular then $\C{F}_{\infty}\cap 
C_0[F_{\infty}]$ is a dense subalgebra of $C_0[F_{\infty}]$.
\end{lemma}

\begin{proof} 
It is clearly a subalgebra. 
We use the Stone-Weierstrass Theorem in the same manner as in Lemma \ref{lem:uniformdense}. Choose $z_1 \neq z_2 \in F_{\infty}$. Then $\pi_{\infty,1}(z_1)$ and $\pi_{\infty,1}(z_2)$ as functions on $F_0 \times G_1 \times \cdots$ differ in at least one coordinate. If that coordinate is $F_0$ then the same argument as in Lemma \ref{lem:uniformdense} can be used again to show a pair of functions separating these two points. If the first coordinate in which a difference occurs is $G_j$ then by Theorem \ref{thm:EiDF} implies that $\C{E}_{j-1}$ is regular and then the proof of Lemma \ref{lem:uniformdense} again shows that there exists a pair of separating functions. Hence by Stone-Weierstrass we have the proof. 
\end{proof}

\begin{theorem}
If $\Delta_i$ is the Laplacian generated by $\C{E}_i$ and $\Phi_j\ :\ \lim_{\leftarrow} F_i \rightarrow F_j$ the continuous projection form the projective limit construction. Then 
$$\Phi_{i-1}^{*} Dom(\Delta_{i-1}) \subset \Phi_i^{*} Dom(\Delta_i)\ \forall i \ge 0.$$
\end{theorem}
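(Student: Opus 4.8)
The plan is to reduce the inclusion to a statement purely at level $i$ by means of the consistency relation $\Phi_{i-1} = \phi_i \circ \Phi_i$ of Definition \ref{def:projsys}, which dualizes to $\Phi_{i-1}^{*} = \Phi_i^{*} \circ \phis{i}$. Given $u \in Dom(\Delta_{i-1})$ this yields $\Phi_{i-1}^{*} u = \Phi_i^{*}(\phis{i} u)$, so it suffices to prove the level-$i$ statement $\phis{i}\, Dom(\Delta_{i-1}) \subset Dom(\Delta_i)$ for $i \ge 1$; in fact I expect to prove the sharper intertwining $\Delta_i \circ \phis{i} = \phis{i} \circ \Delta_{i-1}$, from which the inclusion is immediate.

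First I would record two elementary facts. Pulling back through $\pi_i$ and using $\phi_i \circ \pi_i = \psi_i$ gives $\pis{i}(\phis{i} w)(x,g) = \psi_i^{*} w(x,g) = w(x)$, i.e.\ $\phis{i} w$ lifts to the function on $F_{i-1}\times G_i$ that is constant in $g$. Combined with Definition \ref{def:mui} and the fact that $\mG{i}$ is a probability measure, this shows $\phis{i}: L^{2}(F_{i-1},\mF{i-1}) \to L^{2}(F_i,\mF i)$ is an isometry. Next, since $Dom(\Delta_{i-1}) \subset \C{F}_{i-1}$ and $\phis{i}\C{F}_{i-1}\subset\C{F}_i$ by the corollary that the form domains are nested, we already have $\phis{i} u \in \C{F}_i$; it remains only to exhibit its Laplacian.

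To identify $\Delta_i(\phis{i} u)$ I would invoke the generator characterization of \cite{FOT}: $v\in Dom(\Delta_i)$ with $\Delta_i v = w$ \iFF $v\in\C{F}_i$, $w\in L^{2}(F_i,\mF i)$, and $\C{E}_i(v,\varphi) = -\langle w,\varphi\rangle_i$ for all $\varphi$ in a form-core, for which I take $\F{F}_i$. Putting $v = \phis{i} u$ and $\varphi\in\F{F}_i$ with $\pis{i}\varphi(x,g) = \sum_k \varphi_k(x)h_k(g)$, Definition \ref{def:ei} together with the identity above gives
\begin{equation*}
	\C{E}_i(\phis{i} u,\varphi) = \int_{G_i} \C{E}_{i-1}\!\left(u,\ \pis{i}\varphi(\cdot,g)\right)\, d\mG{i}(g),
\end{equation*}
where for each fixed $g$ the second slot $\sum_k h_k(g)\varphi_k$ is a finite combination of elements of $\C{F}_{i-1}$, hence in $\C{F}_{i-1}$. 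Applying $\C{E}_{i-1}(u,\cdot) = -\langle \Delta_{i-1}u,\cdot\rangle_{i-1}$ inside the integral, then Fubini, and finally Definition \ref{def:mui} to the product $\left(\phis{i}\Delta_{i-1}u\right)\cdot\varphi$, whose $\pi_i$-lift is $\psi_i^{*}(\Delta_{i-1}u)\cdot\pis{i}\varphi$, collapses the iterated integral to $-\langle \phis{i}(\Delta_{i-1}u),\varphi\rangle_i$. Since $\phis{i}$ is an $L^{2}$-isometry, $\phis{i}(\Delta_{i-1}u)\in L^{2}(F_i,\mF i)$, so the characterization yields $\phis{i} u\in Dom(\Delta_i)$ with $\Delta_i(\phis{i}u) = \phis{i}(\Delta_{i-1}u)$, and composing with $\Phi_i^{*}$ closes the argument.

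The routine parts are the pullback algebra and the isometry. The step I expect to demand the most care is the passage from the fiberwise weak formula to the global one: one must justify measurability in $g$ of $\C{E}_{i-1}(u,\pis{i}\varphi(\cdot,g))$, which for products is precisely Lemma \ref{lem:Gmeas} and is exactly where restricting test functions to the core $\F{F}_i$ pays off, since then $\pis{i}\varphi(\cdot,g)\in\C{F}_{i-1}$ holds for \emph{every} $g$ rather than only almost every $g$ as in Proposition \ref{prop:downward}, and then apply Fubini with Definition \ref{def:mui} to recombine the integrals over $G_i$ and $F_{i-1}$ into a single integral over $F_i$. Finally, that testing against the core suffices follows because $\langle \phis{i}(\Delta_{i-1}u),\cdot\rangle_i$ is $\|\cdot\|_i$-continuous, hence continuous in the form norm, so the identity extends from the dense set $\F{F}_i$ to all of $\C{F}_i$.
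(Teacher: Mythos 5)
Your proposal is correct and follows essentially the same route as the paper: both arguments establish the stronger intertwining $\Delta_i\phis{i} = \phis{i}\Delta_{i-1}$ by applying the weak (generator) characterization, expanding $\C{E}_i(\phis{i}u,\cdot)$ fiberwise over $G_i$ via Definition \ref{def:ei}, integrating by parts against $\Delta_{i-1}u$ in each fiber, and recombining with Fubini and Definition \ref{def:mui}. The only difference is a technical one: you test against the core $\F{F}_i$ and extend by density and form-norm continuity, whereas the paper tests directly against all of $\C{F}_i$, invoking Proposition \ref{prop:downward} to guarantee $\pis{i}v(\cdot,g)\in\C{F}_{i-1}$ for almost every $g$.
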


\begin{proof}
For a general Dirichlet form $(\C{E},\C{F})$ with generator $\Delta$, $h$ is in $Dom(\Delta)$ if and only if there exists $f \in L^{2}$ such that 
$$\C{E}(h,v) = \langle f,v\rangle_{L^{2}}$$
for any $v \in \C{F}$, and in this situation $\Delta h = f$. It is sufficient to check that if $u \in Dom(\Delta_{i-1})$ then $\phi_i^{*}u \in Dom(\Delta_i)$. Since $\C{F}_i \subset \hat{\C{F}}_i$ we have that $\pis{i}v(\cdot,g) \in \C{F}_{i-1}$ for almost every $g$. Let $u \in Dom(\Delta_{i-1})$ and $v \in \C{F}_i$. Then
\begin{align*}
	\C{E}_i(\phi_i^{*},v) &= \int_{G_i} \C{E}_{i-1}(\pis{i}\phi^{*}_i u, \pis{i} v)(g)\ d{\mG{i}}(g)\\
	&=  \int_{G_i} \C{E}_{i-1}(u,\pis{i} v)(g)\ d{\mG{i}}(g)\\
	&=  \int_{G_i} \int_{F_{i-1}} \Delta_{i-1}u(x,g)\ \pis{i}v(x,g)\ d{\mF{i-1}}(x)\ d{\mG{i}}(g)\\
	&= \int_{F_{i-1}\times G_i} \Delta_{i-1}u(x,g)\ \pis{i}v(x,g)\ d(\mF{i-1}\times \mG{i})(x,g)\\
	&= \int_{F_i} (\phi^{*}_i \Delta_{i-1}u(x)\ v(x)\ d\mF{i}(x).
\end{align*}
Thus $\phi^{*}_i(\Delta_{i-1} u) = \Delta_i \phi^{*}_i u.$ So $\phi^{*}_i u \in Dom(\Delta_i)$.
\end{proof}

\begin{definition}
For $i \ge 1$, given a Borel measurable $f\ :\ F_i \rightarrow \B{R}$ define the projections $\tilde{\C{P}}_i\ :\ L^{2}(F,\mF{i}) \rightarrow L^{2}(F_{i-1},\mF{i-1})$ and $\C{P}_i\ :\ L^{2}(F_i,\mF{i}) \rightarrow L^{2}(F_i,\mF{i})$ by 
$$\tilde{\C{P}}_i(f)(x) =  \int_{G_i} (\pis{i}f)(x,g) d\mG{i}(g)$$
and
$$\C{P}_i(f)(x) = \phi^{*}_i \left( \int_{G_i} (\pis{i}f)(x,g) d\mG{i}(g) \right) = \phi^{*}_i \tilde{\C{P}}_i(f)(x).$$
These projections can be restricted to have domains $C_0[F_i]$ or $\C{F}_i$ as subspaces of $L^{2}(F_i,\mF{i})$. The domain will be made clear in each context. 
\end{definition}

The integral in this definition maps a function on $F_{i-1} \times G_i$ to a function on $F_{i-1}$ so that $\C{F}_i$ takes functions on $F_i$ and returns another function on $F_i$. Note that $\C{P}_i(f)(x) = f(x)$ for $x \in B_i$ because $\pis{i}f(x,g)$ is constant overall values of $g$ if $x \in B_i$. On the other hand $\tilde{\C{P}}_i$ can be composed to project down several levels, say from $i$ to $i=3$. Let $\Pi_i \ (\Phi^{*}_i)^{-1}proj_{\Phi^{*}_i(L^{2}(F_i,\mF{i}))}$, where $proj_X$ is the orthogonal projection in $L^{2}(F_{\infty},\mF{\infty})$ onto a closed subspace $X$, which is the left inverse of $\Phi^{*}_i$. The families $\C{P}_i$, $\tilde{\C{P}}_i$, and $\Pi_i$ satisfy the following relation for $f \in L^{2}(F_i,\mF{i})$:
$$\Pi_{i-1} \circ \Phi^{*}_i(f) = \tilde{\C{P}}_i(f).$$
The map $\Pi_i$ has, for functions in $\bigcup_{i \ge 0} \Phi^{*}_i \C{F}_i$, a nice explicit form. Suppose $u \in \bigcup_{i \ge 0} L^{2}(F_i,\mF{i})$ then there exists a $j \in \B{N}$ and a $v \in L^{2}(F_j,\mF{j})$ such that $u = \Phi^{*}_j v$. Then
$$\Pi_i u = \left\{\begin{array}{lr} \tilde{\C{P}}_j \circ \cdots \circ \tilde{\C{P}}_{i+1} v & j>i \\ v & j=i \\ \phi^{*}_i \circ \cdots \circ \phi^{*}_{j+1} v & j<i.\end{array} \right.$$

\begin{definition}\label{def:projkernel}
Since $C_0[F_i]$ and $\C{F}_i$ have natural injections into $L^{2}(F_i,\mF{i})$ we can set the following notation:
\begin{align*}
ker(\C{P}_i|_{L^{2}(F_i,\mF{i})}) &= \C{L}_i\\
ker(\C{P}_i|_{C_0[F_i]} )&= \C{C}_i\\
ker(\C{P}_i|_{\C{F}_i} )&= \C{F}'_i
\end{align*}
\end{definition}

The following three lemmas describe the behaviors of the projection $\C{P}_i$ on each of its three domains of interest.

\begin{lemma}\label{lem:projL2}
Let $\C{P}_i$ be defined on $L^{2}(F_i,\mF{i})$ as above. Then 
$$L^{2}(F_i,\mF{i}) = \phi^{*}_i(L^{2}(F_{i-1},\mF{i-1}) \oplus \C{L}_i.$$
Moreover, $h \in \C{L}_i$ if and only if $\phi^{*}_ih(x,g)$ satisfies
$$\int_{G_i} \phi^{*}_ih(x,g)\ d\mG{i} = 0$$
for $\mF{I-1}$-almost every $x \in F_{i-1}$.
\end{lemma}

\begin{proof}
The operators $\C{P}_i$ is an orthogonal projection operators. The eigenspace corresponding to the eigenvalue $1$ is precisely those functions for which $\int_{G_i} \pis{i}f(x,g)\ d\mG{i} = \pis{i}f(x,g)$ for all $x \in F_{i-1}$ and $g \in G_i$. These functions are in $\phi^{*}_i(L^{2}(F_i,\mF{i}))$. The orthogonal complement is then the kernel of the projection.
\end{proof}

\begin{lemma}\label{lem:projC}
Let $\C{P}_i$ be defined on $C_0[F_i]$. Then 
$$C_0[F_i] = \phi^{*}_i(C_0[F_{i-1}]) \oplus \C{C}_i.$$
Moreover, $h \in \C{C}_i$ if and only if $\pis{i}h(x,g)$ satisfies
$$\int_{G_i} \pis{i}h(x,g)\ d\mG{i} = 0$$
for all $x \in F_{i-1}$.
\end{lemma}

\begin{proof}
Claim: $\tilde{\C{P}}_i(C_0[F_i]) \subset C_0[F_{i-1}]$. Let $f \in C_0[F_i]$. Since $\pis{i}f(x,g) \in C_0[F_{i-1}\times G_i]$ this reduces to whether continuity is preserved when integrating over $G_i$, that is if 
$$\int_{G_i} \pis{i}f(x,g)\ d\mG{i}(g)$$
is continuous in $x \in F_{i-1}$. Bu since $\pis{i}f$ is a compactly supported continuous function it is bounded and an application of the Lebesgue Dominated Convergence Theorem provides the continuity. Now note that $\C{C}_i = \C{L}_i \cap C_0[F_i]$ and $\phi^{*}_i(C_0[F_{i-1}]) = \phi^{*}_i(L^{2}(F_{i-1},\mF{i-1}) \cap C_0[F_i])$.
\end{proof}

\begin{lemma}\label{lem:projF}
Let $\C{P}_i$ be defined on $\C{F}_i$. Then 
$$\C{F}_i = \phi^{*}_i(\C{F}_{i-1}) \oplus \C{F}'_i.$$
Moreover, $h \in \C{F}'_i$ if and only if $\pis{i}h(x,g)$ satisfies
$$\int_{G_i} \pis{i}h(x,g)\ d\mG{i} = 0$$
for $\C{E}(\cdot) + \| \cdot \|_i^{2}$-almost every $x \in F_i$. Moreover, the core $C(F_i) \cap \C{F}_i$ of the Dirichlet form $(\C{E}_i,\C{F}_i)$ has the same decomposition.
\end{lemma}

\begin{proof}
On $\C{F}_i$, $\C{P}_i$ is the orthogonal projection. Its range by the same arguments as in Lemma \ref{lem:projL2} is $\phi^{*}_i(\C{F}_{i-1})$ which has also for the same reasons kernel $\C{F}'_i$. The core decomposes as a consequence of the first claim of this lemma and Lemma \ref{lem:projC}.
\end{proof}

\begin{lemma}\label{lem:DeltaInfty}
The generator of $(\C{E}_{\infty},\C{F}_{\infty})$, denoted $\Delta_{\infty}$, is the weak limit of $\Phi^{*}_i \Delta_i \Pi_i$ that is 
$$\Pi_i(Dom(\Delta_{\infty}) = Dom(\Delta_i) \text{\ and\ } \Delta_i\Pi_i|_{Dom(\Delta_{\infty})} = \Pi_i \Delta_{\infty}$$
for any $i \ge 0$. Furthermore for any $f \in Dom(\Delta_{\infty})$,
$$\lim_{i \rightarrow \infty} \Phi^{*}_i\Delta_i \Pi_i f = \Delta_{\infty} f$$
in $L^{2}(F_{\infty},\mF{\infty})$.
\end{lemma}

\begin{proof}
First $\Delta_{\infty}$ is the unique maximal self-adjoint operator on $L^{2}(F_{\infty},\mF{\infty})$ such that for all $f \in Dom(\Delta_{\infty}) \subset \C{F}_{\infty}$ and $g \in \C{F}_{\infty}$ that
$$ \langle \Delta_{\infty} f,g\rangle = \C{E}_{\infty}(f,g).$$
The first claim is equivalent to $proj_{\Phi^{*}_{i}(L^{2}(F_i,\mF{i}))}Dom(\Delta_{\infty}) \subset \Phi^{*}_i Dom(\Delta_i).$ The opposite inclusion is trivial. Observe that for $f,g \in \bigcup_{i \ge 0} \Phi^{*}_i \F{F}_i$ that 
$$\C{E}_{\infty}( proj_{\Phi^{*}_I(L^{2}(F_i,\mF{i}))^{\perp}}f, proj_{\Phi^{*}_i(L^{2}(F_i,\mF{i}))}g) = 0,$$
and that $\bigcup_{i \ge 0} \Phi^{*}_i \F{F}_i$ is dense in $\C{F}_{\infty}$ so this extends to all of $\C{F}_{\infty}$. Also the projection of elements of $\C{F}_{\infty}$ onto $\Phi^{*}_iL^{2}(F_i,\mF{i})$ are elements of $\Phi^{*}_i\C{F}_i$. Combining these observations we have that for $g \in \Phi^{*}_i\C{F}_i \subset \C{F}_{\infty}$ and $f \in Dom(\Delta_{\infty}) \subset \C{F}_{\infty}$
\begin{align*}
	\C{E}_i(\Pi_if,\Pi_i g) & = \C{E}_{\infty}(proj_{\Phi^{*}_i (L^{2}(F_i,\mF{i}))} f,g)\\
	&= \C{E}_{\infty}(f,g) - \C{E}_{\infty}(proj_{\Phi^{*}_i (L^{2}(F_i,\mF{i}))^{\perp}} f,g)\\
	&= \langle \Delta_{\infty} f,g \rangle_{L^{2}(F_{\infty},\mF{\infty})} - 0\\
	&= \langle  \Delta_{\infty} f, proj_{\Phi^{*}_i (L^{2}(F_i,\mF{i}))}g \rangle_{L^{2}(F_{\infty},\mF{i\infty})}\\
	&= \langle proj_{\Phi^{*}_i (L^{2}(F_i,\mF{i}))} \Delta_{\infty} f, proj_{\Phi^{*}_i (L^{2}(F_i,\mF{i}))}g \rangle_{L^{2}(F_{\infty},\mF{i})}\\
	&= \langle \Pi_i \Delta_{\infty} f, \Pi_i g \rangle_{L^{2}(F_i,\mF{i})}
\end{align*}
since $g = proj_{\Phi^{*}_i (L^{2}(F_i,\mF{i}))} g$. From this we have that $\C{E}_i(\Pi_i f,g') = \langle \Pi_i \Delta_{\infty} f, g' \rangle_{L^{2}(F_i,\mF{i})}$ for all $g' \in \C{F}_i$ hence $\Pi_i f \in Dom(\Delta_i)$ and $\Delta_i \Pi_i = \Pi_i \Delta_{\infty}$ on $Dom(\Delta_{\infty})$.

The convergence in norm of $\Phi^{*}_i \Delta_i \Pi_i f = \Phi^{*}_i \Pi_i \Delta_{\infty} = proj_{\Phi^{*}_i(L^{2}(F_i,\mF{i}))} \Delta_{\infty}$ follows from the fact that $proj_{\Phi^{*}_i(L^{2}(F_i,\mF{i}))} \rightarrow id$ as $L^{2}$ operators.
\end{proof}

\begin{definition}
Let $\F{D}_0' = \Phi^{*}_0 Dom(\Delta_0)$. Then inductively define $\F{D}_i'$ by:
$$\F{D}_i' = \Phi^{*}_i Dom(\Delta_{i}) \cap \F{D}_{i-1}'^{\perp}.$$
The orthogonal compliment is taken in $L^{2}(F_{\infty},\mF{\infty})$. This implies that $\Phi^{*}_i Dom(\Delta_i) = \oplus_{j=0}^{i} \F{D}_j'.$
\end{definition}

\begin{theorem}\label{thm:fcnspacedecomp}
Using the notation of Definition \ref{def:projkernel} we have the following decompositions:
\begin{align*}
	L^{2}(F_{\infty},\mu_{\infty}) &= clos_{L^{2}(F_{\infty},\mu_{\infty})}\left( \Phi^{*}_0 L^{2}(F_0,\mu_{F_0}) \oplus\left( \oplus_{i=1}^{\infty} \Phi^{*}_i \C{L}_i  \right)\right)\\
	C(F_{\infty}) &= clos_{unif}\left( \Phi^{*}_0 C(F_0) \oplus \left( \oplus_{i=1}^{\infty} \Phi^{*}_i \C{C}_i  \right)\right)\\
	\C{F}_{\infty} &= clos_{\C{F}_{\infty}} \left(\Phi^{*}_0  \C{F}_0 \oplus \left( \oplus_{i=1}^{\infty} \Phi^{*}_i \C{F}'_i  \right)\right).
\end{align*}
\end{theorem}

\begin{proof}
By definition $L^{2}(F_{\infty},\mu_{F_{\infty}})$ is the completion of $\bigcup_{i=0}^{\infty} \Phi^{*}_i L^{2}(F_i,\mu_{F_i})$ what is new is the direct sum decomposition. Let $f \in L^{2}(F_1,\mF{1})$ then notice that $f = (f-\C{P}_1 f) + \phis 1 (\tilde{\C{P}}_1 f) \in \C{L}_1 \oplus \phis 1 L^{2}(F_0,\mF{0})$. In general for $f \in L^{2}(F_2,\mF{2})$ we would have
\begin{equation*}
	f = (f- \C{P}_2f) + \phis 2( \tilde{\C{P}}_2 f - \C{P}_1 \tilde{\C{P}}_2 f) + \phis 2 \phis 1(\tilde{\C{P}}_1\tilde{\C{P}}_2 f) \in \C{L}_2 \oplus \phis 2 \C{L}_1 \oplus \phis 2 \phis 1 L^{2}(F_0,\mF{0}). 
\end{equation*}
Continuing by this method we have the direct sum expansion for $L^{2}(F_i,\mF{i})$ for any $i \ge 1$. The $L^{2}(F_{\infty},\mF{\infty})$ limits of these expansions must then be all of $L^{2}(F_{\infty},\mF{\infty})$ since they contain $\bigcup_{i \ge 0} \Phi^{*}_i L^{2}(F_{i},\mF{i})$. The same argument works for $C(F_{\infty})$ and $\C{F}_{\infty}$.
\end{proof}

The domain of $\Delta_{\infty}$ can be decomposed into the direct sum of $\F{D}_i'$, or as $Dom(\Delta_{\infty}) \cap \C{L}_i$ or as $Dom(\Delta_{\infty}) \cap \C{F}_i'$.

\begin{lemma}
The three direct sum decompositions of $Dom(\Delta_{\infty})$ mentioned above agree, that is:
\begin{align*}
	\F{D}'_i = Dom(\Delta_{\infty}) \cap \Phi^{*}_i \C{L}_i = Dom(\Delta_{\infty}) \cap \Phi_i^{*} \C{F}'_i
\end{align*}
for $i \ge 0$ and $\C{L}_0 = L^{2}(F_0)$ and $\C{F}'_0 = \C{F}_0$. Furthermore, the closure of $\Delta_{\infty}|_{\oplus_{i=0}^{\infty} \F{D}'_i}$ $\left( and \ \Delta_{\infty}|_{\bigcup_{i=0}^{\infty} \Phi_i^{*} \C{F}_i} \right)$ in the graph-norm, $\| \cdot \|^{2}_{\infty} + \langle \Delta_{\infty} \cdot , \cdot \rangle$,  is equal to $\Delta_{\infty}$. 
\end{lemma}

\begin{proof}
Because $\C{F}_{i} \subset L^{2}(F_{i})$ we know that $\C{F}'_i \subset  \C{L}_i$. This, together with the fact that $Dom(\Delta_{\infty}) \subset \C{F}_{\infty}$, implies that $Dom(\Delta_{\infty}) \cap \Phi^{*}_i\C{L}_i =Dom(\Delta_{\infty}) \cap \Phi^{*}_i \C{F}_i'$. 

For $f \in Dom(\Delta_{\infty})$ observe that $\Phi_i^{*}\Pi_i f \in \Phi^{*}_i Dom(\Delta_i) \cap \Phi^{*}_i L^{2}(F_i)$ as well as in $L^{2}(F_{\infty})$ so $\Phi^{*}f \rightarrow f$ in $L^{2}(F_{\infty})$ and $\Delta_{\infty}\Phi_i^{*} \Pi_i f \rightarrow \Delta_{\infty} f$ in $L^{2}(F_{\infty})$ as $i \rightarrow \infty$ thus $\Delta_{\infty}$ is the closure of its restriction to $\bigcup_{i=0}^{\infty} \Phi_i^{*} Dom(\Delta_i)$. 
\end{proof}

\section{Main Results}\label{sec:main}

From the discussion in the previous section we can consider $(\Delta_{\infty}, \overline{\F{D}_i'})$ as a densely defined operator on $\Phi^{*}_i L^{2}(F_i)$ as a closed subspace of $L^{2}(F_{\infty},\mF{\infty})$, that is 
\begin{displaymath}
	\overline{\F{D}_i'}^{L^{2}(F_{\infty},\mF{\infty})} = \Phi^{*}_i L^{2}(F_i,\mF{i}) \cap (\Phi^{*}_{i-1} L^{2}(F_{i-1},\mF{i-1}))^{\perp, L^{2}(F_{\infty},\mF{\infty})}.
\end{displaymath}
However since $ \overline{\F{D}_i'}$ can be written as the intersection of two closed subspaces it is itself closed in $L^{2}(F_{\infty},\mF{\infty})$ and we can drop the closure symbol. 

\begin{theorem}\label{thm:specdecomp}
The spectrum of $\Delta_{\infty}$ is given by:
\begin{equation*}
	\sigma(\Delta_{\infty}) = \overline{\bigcup_{i=0}^{\infty} \sigma(\Delta_i)} = \overline{\bigcup_{i=0}^{\infty} \sigma(\Delta_{\infty}|_{\F{D}'_i})}
\end{equation*}
\end{theorem}

\begin{proof}
We begin with the statement that $\sigma(\Delta_n) = \sigma(\Delta_{\infty}|_{\F{D}_n})$ where $\F{D}_n = \oplus_{i=0}^{n} \F{D}_i'$. Since the $\F{D}_i'$ are closed mutually-orthogonal subspaces of $L^{2}(F_{\infty},\mF{\infty})$ $\F{D}_n$ is a closed subspace and $\Delta_{\infty}$ is defined on it. Because $\F{D}_n$ is the direct sum in $L^{2}(F_{\infty},\mF{\infty})$ of only finitely many $\F{D}_i'$ then  $\sigma(\Delta_{\infty}|_{\F{D}_n}) = \bigcup_{i=0}^{n} \sigma(\Delta_{\infty}|_{\F{D}_i'})$. From this the right hand equality in the statement follows. 

Let $z \in \sigma(\Delta_n)$. Then by Lemma \ref{lem:DeltaInfty} $(\Delta_n - z)$ is not invertible on $Dom(\Delta_n)$. Since $(\Delta_{\infty} - z)$ agrees with $(\Delta_n - z)$ on $\Phi_n^{*} Dom(\Delta_n) \subset Dom(\Delta_{\infty})$ we have that $(\Delta_{\infty} - z)$ is not invertible. Hence $\sigma(\Delta_n) \subset \sigma(\Delta_{\infty})$ for all $n \ge 0$. So $\sigma(\Delta_{\infty}) \supset \overline{\bigcup_{i=0}^{\infty} \sigma(\Delta_i)}$. The other containment will take more work. 

Suppose that $z \in \sigma(\Delta_{\infty})$ and $z \not\in \overline{\bigcup_{i=0}^{\infty} \sigma(\Delta_i)}$. Define $B_z : L^{2}(F_{\infty},\mF{\infty}) \rightarrow Dom(\Delta_{\infty})$ by 
\begin{equation*}
	B_z = s-\lim_{i \rightarrow \infty} \Phi_i^{*}(\Delta_i -z)^{-1} \Pi_i.
\end{equation*}
Notice that $B_z$ is linear since all of its components are. Also each are bounded operators as well. For our choice of $z$ the distance from $z$ to $\overline{\bigcup_{i=0}^{\infty} \sigma(\Delta_i)}$ is positive so $\Phi_i^{*}(\Delta_i -z)^{-1} \Pi_i$ are bounded linear operators with norm bounded uniformly in $i$, so their limit is also bounded. That is, $B_z$ is a bounded linear operator on $L^{2}(F_{\infty},\mF{\infty})$. We claim that $B_z$ is the inverse of $(\Delta_{\infty} - z)$ contradicting the assumption that $z \in \sigma(\Delta_{\infty})$. Recall that $\Pi_i$ is a bounded linear and hence continuous operator. Let $f \in \bigcup_{i=0}^{\infty}\Phi_i^{*}Dom(\Delta_{i})$, then we have the following point-wise limit statement on a dense subspace of the domain of $\Delta_{\infty}-z$:
\begin{eqnarray}
	B_z(\Delta_{\infty} - z) f &=& \lim_{n \rightarrow \infty} \Phi_n^{*} (\Delta_n - z)^{-1} \Pi_n \lim_{m \rightarrow \infty} \Phi_m^{*} (\Delta_m - z ) \Pi_m f \notag\\
	&=&  \lim_{n \rightarrow \infty} \Phi_n^{*} (\Delta_n - z)^{-1} \Pi_n \Phi_M^{*} (\Delta_M - z ) \Pi_M f \label{eq:stabilize}\\
	&=&  \Phi_M^{*} (\Delta_M - z)^{-1} (\Delta_M - z ) \Pi_M f \notag\\
	&=& f.\notag
\end{eqnarray}
For large enough $m$ $ \lim_{m \rightarrow \infty} \Phi_m^{*} (\Delta_m - z ) \Pi_m f$ stabilizes to $\Phi_M^{*}(\Delta_M-z)\Pi_M f$ then as $n$ grows (\ref{eq:stabilize}) will also stabilize for $n \ge M$. Then since $\Delta_{\infty}$ is a closed operator the claim extends to $Dom(\Delta_{\infty})$. Finally by the decompositions in Lemmas \ref{lem:projL2} and \ref{lem:projF} the last limit equals $f$. Similar calculations can be used to show that $(\Delta_{\infty} - z)B_z = Id$. Thus there exists no $z \in \sigma(\Delta_{\infty})$ that is not in $\overline{\bigcup_{i=0}^{\infty} \sigma(\Delta_i)}$.
\end{proof}

In the standard theory of self-adjoint operators lie the spectral resolutions of self-adjoint operators \cite{Lax2002}. These spectral resolutions are orthogonal projection valued measures over $\B{R}$ supported on the spectrum of the operator they are representing. For $\Delta_{\infty}$ let $E_{\lambda}$ be the spectral resolution. Then
\begin{equation*}
	\Delta_{\infty} f = \int_{\sigma(\Delta_{\infty})} \lambda dE_{\lambda} f.
\end{equation*}
Note that for each $\lambda \in \mathbb{R}$, $E_{\lambda}:L^{2}(F_{\infty},\mF{\infty}) \rightarrow Dom(\Delta_{\infty})$ where for $f \not\in Dom(\Delta_{\infty})$ the integral fails to converge. We also have the orthogonal projections $\C{P}_i$ out of $Dom(\Delta_{\infty})$. 

From the previous discussion the following statement follows immediately.
 
\begin{theorem}\label{thm:specproj}
Let $E_{\lambda}$ be a spectral projection operator for $\Delta_{\infty}$. Then for all $\lambda \in \mathbb{R}$ and $i \in \mathbb{N}$
\begin{equation*}
	\F{D}'_i \cap E_{\lambda}(Dom(\Delta_{\infty})) = E_{\lambda}\F{D}'_i.
\end{equation*}
\end{theorem}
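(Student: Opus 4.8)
The plan is to reduce the claimed set identity to the single structural fact that $\F{D}'_i$ is a \emph{reducing} subspace for the self-adjoint operator $\Delta_{\infty}$. Write $P_i$ for the orthogonal projection of $L^{2}(F_{\infty},\mu_{\infty})$ onto the closure of $\F{D}'_i$. First I would argue that $P_i$ commutes with $\Delta_{\infty}$, so that by the spectral theorem $P_i$ commutes with the entire spectral family $E_{\lambda}$. Granting this, the stated equality is a short double inclusion: if $f \in \F{D}'_i$ then $E_{\lambda} f = E_{\lambda} P_i f = P_i E_{\lambda} f$ lies in $\F{D}'_i$ and also in $E_{\lambda}(Dom(\Delta_{\infty}))$, giving $E_{\lambda}\F{D}'_i \subset \F{D}'_i \cap E_{\lambda}(Dom(\Delta_{\infty}))$; conversely, if $g = E_{\lambda} h \in \F{D}'_i$ with $h \in Dom(\Delta_{\infty})$, then $g = P_i g = P_i E_{\lambda} h = E_{\lambda} P_i h \in E_{\lambda}\F{D}'_i$ since $P_i h \in \F{D}'_i$. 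The commutation $P_i E_{\lambda} = E_{\lambda} P_i$ is exactly what drives both inclusions.

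The heart of the matter is therefore establishing that $P_i$ commutes with $\Delta_{\infty}$, which I would build up from the finite levels. The intertwining relation $\phi_i^{*}(\Delta_{i-1}u) = \Delta_i\phi_i^{*}u$ proved earlier shows that the isometric image $\phi_i^{*}L^{2}(F_{i-1})$ is invariant under $\Delta_i$; since $\Delta_i$ is self-adjoint and this is a closed subspace, its orthogonal complement $\C{L}_i = \ker(\C{P}_i|_{L^{2}})$ is invariant as well. Equivalently, the projection $\C{P}_i$ commutes with $\Delta_i$, so $\C{L}_i$ reduces $\Delta_i$. Using Lemma \ref{lem:DeltaInfty}, which realizes $\Delta_{\infty}$ as the weak limit of $\Phi_i^{*}\Delta_i\Pi_i$, together with the agreement of the three decompositions of $Dom(\Delta_{\infty})$, I would transfer this reducing property to the limit: the subspace $\F{D}'_i$, which by Definition \ref{def:orthodecomp} and the decomposition of Theorem \ref{thm:decomp} coincides with $Dom(\Delta_{\infty}) \cap \Phi_i^{*}\C{L}_i$, is left invariant by $\Delta_{\infty}$, and so is its orthogonal complement. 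This is precisely the statement that $P_i$ commutes with $\Delta_{\infty}$.

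I expect the main obstacle to be the passage to the limit in this step, namely justifying that the level-$i$ reducing property survives under the weak limit defining $\Delta_{\infty}$ and under the closures used in Definition \ref{def:orthodecomp}. One must check that $\Phi_i^{*}\C{L}_i$ is genuinely orthogonal to the images of the other levels in $L^{2}(F_{\infty})$, which rests on the orthogonality already encoded in $L^{2}(F_{\infty}) = \Phi_0^{*}L^{2}(F_0) \oplus \bigoplus_{i\ge 1}\Phi_i^{*}\C{L}_i$ from Theorem \ref{thm:decomp}, and that the weak-limit representation of $\Delta_{\infty}$ respects this splitting so that no cross terms between $\F{D}'_i$ and $\F{D}_{i-1}^{'\perp}$ appear. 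Once the commutation $P_i\Delta_{\infty} = \Delta_{\infty}P_i$ is secured, the remainder is the routine functional-calculus argument sketched above.
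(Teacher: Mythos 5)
Your argument is correct and is essentially the one the paper intends: the paper's ``proof'' consists of the single remark that the claim follows immediately from the preceding discussion, and that discussion amounts precisely to the fact you make explicit, namely that the orthogonal projection onto $\F{D}'_i$ commutes with $\Delta_{\infty}$ (via the intertwining relation, Lemma \ref{lem:DeltaInfty}, and the agreement of the three decompositions of $Dom(\Delta_{\infty})$) and hence with every $E_{\lambda}$. Your write-up supplies the reducing-subspace details and the double inclusion that the paper leaves implicit.
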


Similar statements could be made for $L^{2}(F_{\infty},\mF{\infty}),\ \mathcal{F}_{\infty}$, however we have not developed the notation for these spaces corresponding to the $\F{D}'_i$ notation. 

\begin{corollary}\label{cor:specdiscrete}
Suppose that $\mathcal{E}_0$ is a local regular Dirichlet form. 
Assume that $\sigma(\Delta_i|_{\F{D}_i'}) \subset [M_i,\infty)$ where $\lim_{i \rightarrow \infty} M_i = \infty$ and $\sigma(\Delta_i)$ are all discrete. Then $\sigma(\Delta_{\infty}) = \bigcup_{i=0}^{\infty} \sigma(\Delta_n)$.
\end{corollary}

\begin{proof}
By Theorem \ref{thm:specdecomp}, 
\begin{displaymath}
	\sigma(\Delta_{\infty}) = \overline{\bigcup_{i=0}^{\infty} \sigma(\Delta_{\infty}|_{\F{D}'_i})}.
\end{displaymath}
However for any compact interval $[0,N]$
\begin{displaymath}
	\sigma(\Delta_{\infty}) \cap [0,N] = [0,N] \cap \bigcup_{i=0}^{M} \sigma(\Delta_{\infty}|_{\F{D}'_i})  
\end{displaymath}
for some $M = M(N)$ by hypothesis. Since each of the $\sigma(\Delta_i) = \bigcup_{j=0}^{i} \sigma(\Delta_{\infty}|_{\F{D}_j'})$ are discrete so is $\sigma(\Delta_{\infty}) \cap [0,N]$ for all $N$.
\end{proof}

The main point of this Corollary is that if the operators $\Delta_i$ have spectral gaps going to infinity then the closure in Theorem \ref{thm:specdecomp} adds no new points to the spectrum. There are many sufficient conditions for the two main hypotheses in Corollary \ref{cor:specdiscrete}. For example when computing the spectrum of $\Delta_{\infty}$ for Laakso spaces in Subsection \ref{ssec:Laakso} the spectrum of $\Delta_i$ can be computed directly and explicitly so that these hypotheses are straight forward to check. Also a metric measure space on which the Faber-Krahn inequality \cite{G} holds will satisfy the spectral gap hypothesis. Also if the resolvents of $\Delta_i$ are all known to be compact the spectral gap hypothesis will hold. 

\section{Examples}\label{sec:examples}

The two main classes of example considered here are the Laakso spaces where the horizontal space $F_0$ is taken to be the unit interval and the Sierpinski P\^ate \`a Choux where $F_0$ is a standard Sierpinski gasket. The P\^ate \`a Choux is a new construction suggested by Jean Bellisard. 

\subsection{The Laakso fractal}\label{ssec:Laakso}

\begin{figure}[t]
\begin{center}
\includegraphics[scale=.7]{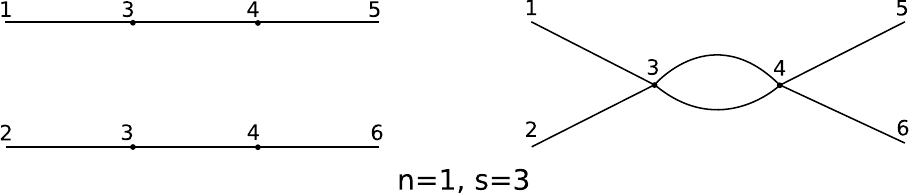}
\caption{The first step in constructing a Laakso space with horizontal space $F_0 = [0,1]$, and vertical set $G_1 = \{0,1\}$ with identifications made on $B_1 = \{\frac13, \frac23\}$.}
\label{fig:Laakso}
\end{center}
\end{figure}

Laakso spaces were initially introduced in \cite{Laakso2000} as the Cartesian product of a unit interval and a number of Cantor sets modulo an equivalence realtion. In \cite{Steinhurst2010,RS09} it was shown that they could also be constructed using the projective limit construction presented originally in \cite{BE04} and reiterated above. Take $F_0 = [0,1]$, the unit interval. Let $G_i = G = \{0,1\}$. Choose a sequence $\{j_l\}_{l=1}^{\infty}$ where $j_l \in \{j,j+1\}$ for some fixed integer, $j$, greater than one. Define
\begin{equation*}
	d_N = \prod_{j=1}^{N} j_i \hspace{1in} L_N = \left\{ \frac{i}{d_N} \right\}_{i=1}^{d_N-1}.
\end{equation*}
Then set $B_n = \phi_{n,0}^{-1} (L_n \setminus L_{n-1} )$. We have abbreviated $\phi_0 \circ \phi_1 \circ \cdots \circ \phi_n$ as $\phi_{n,0}$. The sets $L_N$ describe the location of what the quotient maps $\pi_i$ collapse and $d_N^{-1}$ the separation between the new identifications from any of the old identifications. A Laakso space will be denoted by $L$.

If $\C{E}_0$ is taken to be the standard Dirichlet form on the unit interval, namely $\C{E}_0(u,v) = \int_0^{1} \frac{du}{dx}\frac{dv}{dx} dx$ with the Sobolev space $H^{1,2}([0,1])$ as $\C{F}_0$, then there is a limiting Dirichlet form, $\C{E}_{\infty}$, on $L$ which has a generator $\Delta_{\infty}$. The analysis of the spectrum of $\Delta_{\infty}$ is the topic of \cite{RS09} and several chapters in \cite{Steinhurst2010}. Using the arguments involved in the proofs of Theorem \ref{thm:specdecomp} and Corollary \ref{cor:specdiscrete} the following explicit results hold.

\begin{theorem}[\cite{RS09}]\label{thm:laaksospec}
Let $L$ be a Laakso space with sequence $\{j_i\}$. The spectrum of $\Delta_{\infty}$ on this Laakso space is 
$$\sigma(\Delta_{\infty}) = \bigcup_{n=0}^{\infty} \bigcup_{k=1}^{\infty} \left\{ k^{2}\pi^{2}d_n^{2} \right\} 
\cup
 \bigcup_{n=2}^{\infty} \bigcup_{k=1}^{\infty} \left\{ k^{2}\pi^{2}4d_n^{2} \right\} 
\cup 
\bigcup_{n=1}^{\infty} \bigcup_{k=0}^{\infty} \left\{ (2k+1)^{2}\pi^{2}4d_n^{2} \right\}.$$ 
\end{theorem}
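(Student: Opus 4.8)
The plan is to use the general spectral decomposition established above to reduce this to a sequence of one-dimensional eigenvalue problems, one per level $n$, and then to solve those explicitly. By Theorem \ref{thm:specdecomp} we have $\sigma(\Delta_{\infty}) = \overline{\bigcup_{n=0}^{\infty} \sigma(\Delta_{\infty}|_{\F{D}'_n})}$, so it suffices to identify the eigenvalues contributed by each space $\F{D}'_n$ of Definition \ref{def:orthodecomp}. First I would verify that the Laakso data $F_0 = [0,1]$, $G_i = \{0,1\}$, and $B_n = \phi_{n,0}^{-1}(L_n \setminus L_{n-1})$ meet the hypotheses of Theorem \ref{thm:specdiscrete}: each $G_i$ has cardinality two, $F_0$ is compact, the components of $F_i \setminus B_i$ are open intervals of length $1/d_i \to 0$, and the $B_i$ are finite and increasing. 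Since the Neumann Laplacian on an interval has a strictly positive fundamental solution, Theorem \ref{thm:specdiscrete} then guarantees $\min \sigma(\Delta_{\infty}|_{\F{D}_n}) \to \infty$, so the union is already closed and the closure adds nothing.

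Next I would compute the base contribution. With $\C{E}_0(u,v) = \int_0^1 u'v'\,dx$ on $H^{1,2}([0,1])$, the operator $\Delta_0$ is the Neumann Laplacian on $[0,1]$, whose spectrum is $\{k^2\pi^2 : k \ge 0\}$ with eigenfunctions $\cos(k\pi x)$. Discarding the constant mode (which lies in the range of the projection $\C{P}$ rather than in a $\ker \C{P}_n$), this is exactly the $n=0$ term $\{k^2\pi^2 d_0^2 : k \ge 1\}$ of the first family, since $d_0 = 1$. For $n \ge 1$ the structural fact supplied by Lemmas \ref{lemma:decompL2} and \ref{lemma:decompF} is that $\F{D}'_n \subset \ker \C{P}_n$, i.e.\ that every such eigenfunction has vanishing mean over the fiber $G_n = \{0,1\}$. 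As $G_n$ has two points, this mean-zero condition is precisely the antisymmetry $f(x,0) = -f(x,1)$ under the sheet-swap, which forces each such eigenfunction to vanish at the freshly created gluing points of $B_n = L_n \setminus L_{n-1}$. By the locality established in Theorem \ref{thm:localregularDF}, the eigenvalue equation $-f'' = \lambda f$ on $\F{D}'_n$ therefore decouples into independent Sturm--Liouville problems on the intervals delimited by consecutive points of $L_n$, with Dirichlet conditions imposed at the new nodes of $B_n$ and Neumann conditions at the free endpoints $0,1$.

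The heart of the computation is then the bookkeeping of boundary conditions for these intervals, which is where I expect the real work to lie. One must classify each length-$1/d_n$ interval by whether its endpoints are old identification (branch) points or new Dirichlet nodes, and account for the fact that two antisymmetric sheets glued at a branch point behave like a single interval of effective length $1/d_n$ or $1/(2d_n)$. Solving $-f'' = \lambda f$ with Neumann--Neumann, Dirichlet--Dirichlet, and mixed Neumann--Dirichlet conditions on intervals of these effective lengths produces the families $\{k^2\pi^2 d_n^2\}$, $\{k^2\pi^2 4 d_n^2\}$, and $\{(2k+1)^2\pi^2 4 d_n^2\}$ respectively; the differing lower limits $n \ge 0$, $n \ge 2$, and $n \ge 1$ in the statement arise precisely because each interval type, and hence each boundary-condition combination, first appears only after enough levels of subdivision and gluing have accumulated. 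Assembling these per-level spectra and taking the union over $n$ via Theorem \ref{thm:specdecomp} yields the stated $\sigma(A)$. The main obstacle is the careful geometric accounting of the gluing structure at each level and the correct boundary-condition assignment for each interval; the ODE solving itself is routine.
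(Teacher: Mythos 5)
First, a remark about the target: the paper does not actually prove Theorem \ref{thm:laaksospec} --- it quotes the result from \cite{RS09}, saying only that it is obtained ``using the arguments involved in the proofs of Theorems \ref{thm:specdecomp} and \ref{thm:specdiscrete}.'' Your reduction is precisely that indicated strategy, and the parts you carry out are sound: the Laakso data do satisfy the hypotheses of Theorem \ref{thm:specdiscrete} (so the union over levels is already closed), the level-$0$ contribution is the Neumann spectrum of $[0,1]$, and for $n\ge 1$ the space $\F{D}'_n$ sits inside $\ker\C{P}_n$, so its elements have zero mean over the two-point fiber $G_n$, are antisymmetric under the sheet swap, and therefore vanish at the newly identified points $B_n$. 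One small correction: your reason for discarding the constant mode at level $0$ is not right, since $\F{D}'_0=\Phi_0^*Dom(\Delta_0)$ is \emph{not} the kernel of any projection; the constant is a genuine eigenfunction with eigenvalue $0$, and its absence from the displayed formula is a convention inherited from \cite{RS09}, not something your decomposition produces.

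The substantive gap is that the step you explicitly defer --- the ``bookkeeping of boundary conditions'' --- is the entire content of the theorem. Everything up to that point only says $\sigma(\Delta_\infty)=\bigcup_n\sigma(\Delta_\infty|_{\F{D}'_n})$ with each summand discrete; the specific three families, the effective lengths $1/d_n$ versus $1/(2d_n)$, and above all the starting indices $n\ge 0$, $n\ge 2$, $n\ge 1$ are exactly what must be derived, and your proposal does not derive them. Moreover the one structural claim you do make about this step is slightly off: antisymmetry forces vanishing only at the \emph{new} nodes $B_n$ (the points over $L_n\setminus L_{n-1}$), so the eigenvalue problem decouples at those points, not ``on the intervals delimited by consecutive points of $L_n$.'' Between two consecutive new Dirichlet nodes the eigenfunction lives on a small quantum graph that still contains old branch points of $B_1,\dots,B_{n-1}$ with Kirchhoff matching conditions (the ``V''-shaped and loop-shaped pieces of \cite{RS09}), and it is the symmetry/antisymmetry analysis on \emph{those} graphs --- not a per-interval classification --- that folds them down to intervals of the three effective types and explains why the second family first appears at $n=2$ and the third at $n=1$. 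As written, the proposal is a correct and well-motivated plan that matches the paper's citation, but it stops short of the computation that actually yields the stated $\sigma(A)$.
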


Existence of $\Delta_{\infty}$ follows from Theorem \ref{thm:InftyRegularity}. Theorem \ref{thm:specdecomp} reduces the calculation of $\sigma(\Delta_{\infty})$ to a calculation of $\sigma(\Delta_{\infty}|_{\F{D}_n'})$. Since $F_n$ is a quantum graph composed of intervals all of length $d_i$ with a very regular geometry, $\sigma(\Delta_{\infty}|_{\F{D}_n'})$ can be computed directly using counting arguments \cite{RS09}. Then the hypotheses of Corollary \ref{cor:specdiscrete} the union over $n$ is closed and is the entire spectrum of $\Delta_{\infty}$.

In fact, more in known including the multiplicities of the eigenvalues. Having the multiplicities allows computations of the spectral zeta function to be made and the analysis of physics-inspired problems possible \cite{Steinhurst2010,ST}. 

\subsection{Sierpinski P\^ate \`a Choux}\label{ssec:SP} 
The name of this example was suggested by Jean Bellisard who commented that such a space would evoke the memory of puff pastry in the reader. Denote by $SG$ the standard Sierpinski gasket constructed as the limit of the iterated function system $T_l(x) = \frac{1}{2}(x-q_l) + q_l$ for $l=0,1,2$ where $q_0 = (0,0)$, $q_1=(1,0)$, and $q_2=(\frac{1}{2}, \sqrt{3}/2)$. Define $V_0 = \{ q_0,q_1,q_2\}$ and $V_i = \{T_l{V_{l-1}}\}_{l=0,1,2}$ so $V_i$ is the set of vertices in the $i^{th}$ graph approximation to the Sierpinski gasket. Let $F_0 = SG$, $G_i = G = \{0,1\}$ and $B_i = \phi_{i-1,0}^{-1}(V_i \setminus V_{i-1})$. 

\begin{lemma}
The limit space $F_{\infty}$ is an infinitely ramified fractal with Hausdorff dimension $d_h = 1 + d_H(SG) = \frac{log(6)}{\log(2)}$ with respect to the geodesic metric. 
\end{lemma}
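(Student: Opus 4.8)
The plan is to treat the two assertions separately. Infinite ramification will follow from the fiber structure of the projection $\Phi_0\colon F_\infty\to SG$ over the junction points of the gasket, while the Hausdorff dimension will follow by showing that the limit measure $\mF{\infty}$ is Ahlfors regular of the stated exponent in the geodesic metric. Throughout I use that $F_\infty$ is compact and connected: compactness comes from $F_0=SG$ being compact together with the finiteness of the $G_i$ (as in the proof of Theorem \ref{thm:specdiscrete}), and connectedness from the fact that an inverse limit of connected compact Hausdorff spaces is connected.

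For infinite ramification, I would first note that a point $p\in V_k\subset SG$ lies in exactly one gluing set, namely $p\in B_k$ for the level $k$ at which $p$ first appears. Thus $p$ is collapsed to a single point at step $k$, but since $p\notin B_i$ for every $i>k$ it is re-doubled at each subsequent step. Carrying this through the projective limit shows that the fiber $\Phi_0^{-1}(p)$ is homeomorphic to $\{0,1\}^{\{k+1,k+2,\dots\}}$, an infinite Cantor set. Since two adjacent level-$k$ cells of $SG$ meet only at such a junction point $p$, the corresponding closed cells of $F_\infty$ meet precisely in $\Phi_0^{-1}(p)$. Hence adjacent cells of $F_\infty$ intersect in infinite sets, so no finite set of points can disconnect the space, and $F_\infty$ is infinitely ramified.

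For the dimension I would work with the geodesic metric and with $\mF{\infty}$, which by Definition \ref{def:mui} and Proposition \ref{prop:muinfty} is, up to the measure-zero identification locus $\bigcup_i B_i$, the image of $\mu_{SG}\times\mu_K$, where $K=\{0,1\}^{\mathbb{N}}$ carries the product measure and each cylinder of length $n$ has mass $2^{-n}$. The goal is to prove $\mF{\infty}(B(x,r))\asymp r^{\log 6/\log 2}$ for geodesic balls; since $(2^{-n})^{\log 6/\log 2}=6^{-n}$ this reduces to $\mF{\infty}(B(x,2^{-n}))\asymp 6^{-n}$. The factorization $6^{-n}=3^{-n}\cdot 2^{-n}$ encodes the two directions: $3^{-n}\asymp\mu_{SG}(B_{SG}(\Phi_0 x,2^{-n}))$ is the measure of the projected ball, using that $SG$ is $\tfrac{\log 3}{\log 2}$-regular and that its geodesic metric is bi-Lipschitz to the Euclidean one, while the factor $2^{-n}$ (contributing the exponent $\tfrac{\log 2}{\log 2}=1$, which is the "$+1$") counts the Cantor coordinates resolved at scale $2^{-n}$.

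The key geometric lemma, which I expect to be the main obstacle, is the quantitative claim that switching the $i$-th Cantor coordinate, i.e. reaching a point whose $G_i$-label differs, costs geodesic distance comparable to $2^{-i}$. The upper bound reduces to $B_i$ being $\asymp 2^{-i}$-dense in $SG$, so a nearby point of $B_i$ is always reachable within the budget; the lower bound, that there is no shortcut, requires that any path altering the $i$-th label must pass through $B_i$ and that these gluing points carry no coarser identifications. Granting this lemma, within $B(x,2^{-n})$ one can alter only the labels with index $\gtrsim n$, so the reachable fiber is a cylinder fixing $\sim n$ coarse coordinates, of $\mu_K$-mass $\asymp 2^{-n}$; combined with the projected gasket ball this gives $\mF{\infty}(B(x,2^{-n}))\asymp 6^{-n}$, and the mass distribution principle yields $\dim_H(F_\infty)=\tfrac{\log 6}{\log 2}$. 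The remaining care is to make the estimate uniform in $x$, in particular when $\Phi_0 x$ is on or near a low-level vertex of $SG$, where the switching cost of a coarse coordinate degenerates; there one recovers the same exponent with adjusted constants using the self-similarity of the construction.
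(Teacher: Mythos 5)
Your argument is correct, but it takes a more self-contained and quantitative route than the paper. The paper's proof is essentially two sentences: infinite ramification is read off from the fact that cell boundaries in $F_{\infty}$ are Cantor sets; the upper bound $\dim_H F_{\infty}\le \log 6/\log 2$ comes from $F_{\infty}$ being the image of $SG\times K$ under a non-expanding quotient map (so its dimension is at most $d_H(SG)+d_H(K)=\tfrac{\log 3}{\log 2}+1$); and the matching lower bound is not proved but delegated to the argument of \cite{Steinhurst2011} for Laakso spaces. You instead prove Ahlfors regularity of $\mu_{F_\infty}$ in the geodesic metric, $\mu_{F_\infty}(B(x,2^{-n}))\asymp 3^{-n}\cdot 2^{-n}=6^{-n}$, which gives both bounds simultaneously via the mass distribution principle and is a strictly stronger conclusion; the price is your ``switching cost'' lemma, which the paper never needs for the upper bound and implicitly outsources for the lower one. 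That lemma is sound: $V_i\setminus V_{i-1}$ is $\asymp 2^{-i}$-dense (upper bound on the cost), any path changing the $i$-th label must pass through $B_i$ by construction (lower bound), and the degeneracy you worry about is controlled because distinct points of $V_{i}\setminus V_{i-1}$ and $V_{j}\setminus V_{j-1}$ with $i<j$ are separated by $\gtrsim 2^{-j}$, so at most one coarse index $i\lesssim n$ can have $\Phi_0 x$ within $2^{-n}$ of $V_i\setminus V_{i-1}$, costing only a bounded factor. One small correction to your fiber computation: a point $p\in V_k\setminus V_{k-1}$ is not ``collapsed to a single point at step $k$''; rather its fiber in $F_{k-1}$, already homeomorphic to $\{0,1\}^{k-1}$, is simply not doubled at step $k$ and is doubled at every later step, so $\Phi_0^{-1}(p)\cong\{0,1\}^{\mathbb{N}\setminus\{k\}}$ rather than $\{0,1\}^{\{k+1,k+2,\dots\}}$. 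This is still a Cantor set, so your ramification argument — which matches the paper's in substance — is unaffected.
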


\begin{proof}
The cell structure on $F_{\infty}$ induced by the cell structures on $SG$ and on the Cantor set have boundaries that are themselves Cantor sets. Hence $F_{\infty}$ is infinitely ramified. Since $(\pi^{*}_{\infty,0})^{-1}(F_{\infty}) \subset SG \times \{0,1\}^{\B{N}} = SG \times K$, where $K$ is the Cantor set with contraction ration one half, the Hausdorff dimension is at most $\frac{log(6)}{\log(2)}$. This view of $F_{\infty}$ being ``unpacked'' into $F_0 \times G_1 \times G_2 \times \cdots$ was used in the proof of Theorem \ref{thm:InftyRegularity}.  By the same argument as in \cite{Steinhurst2013} it is at least $\frac{log(6)}{\log(2)}$.
\end{proof}

In light of Corollary \ref{cor:specdiscrete} it would be possible to write out explicitly the spectrum on $F_{\infty}$ as we did with the Laakso spaces. In particular, it is possible but somewhat involved to write the spectrum in a closed form. The reader can find solution to a  similar problem in \cite{RS-AT}. We note that, in the limit, the  Sierpinski P\^ate \`a Choux is not a Sierpinski fractafold, but the approximations $F_i$ are fractafolds. A fractafold, as briefly mentioned in the Introduction is a manifold where the local charts are maps into a reference fractal such as the Sierpinski gasget instead of into a Euclidean space. However, despite the fact that these fractafolds are very complicated, the spectrum of the Laplacian on $F_i$ can be found inductively using  methods presented in this paper. In particular,  the spectrum of each Laplacian $\Delta_i$ is a union of the spectrum of a large collection of  disjoint fractafolds (with Dirichlet boundary conditions). These fractafolds are rescaled copies of two kinds of finite fractafolds, and therefore the spectrum can be found using the methods of \cite{S}, and the standard rescaling by $5^n$. This is very similar to how the spectrum is found in the case of the Laakso spaces, described above. Finally, we can comment that the Laakso spaces are built using  intervals, which are one-dimensional analogs of the Sierpinski gasket. Therefore, in a sense, the Sierpinski P\^ate \`a Choux is a direct generalization of the Laakso spaces.    Combining the approaches of \cite{KKPSS,Steinhurst2010,RS09,S,RS-AT} one can study  all the eigenfunctions and eigenprojections, which will be subject of subsequent work.

\subsection{Connected fractal spaces isospectral to the fractal strings of Lapidus and  van Frankenhuijsen}\label{ssec:LvF}
Fractal strings are given a comprehensive treatment in \cite{LvF}, in particular in relation to spectral zeta functions, and we will only give a brief description here. We show that our construction can yield connected fractal spaces with Laplacians isospectral to the standard Laplacians on fractal strings. This implies, in particular, that there are symmetric irreducible diffusion processes whose generators are Laplacians with prescribed spectrum, as in the theory of fractal strings developed in \cite{LvF}.

A fractal string  is an open subset of $\mathbb R$, usually assumed to be a bounded subset, or at least that the lengths of the connected components are bounded and tend to zero.  Therefore it is a disjoint union of countably many finite intervals of lengths $l_i$. We will suppose that the intervals are indexed so that the lengths form a non-increasing sequence. By reindexing the fractal string with $l_i$ and $m_i$, unique lengths and multiplicities we can assume that $l_i$ is strictly decreasing. The Laplacian that we consider on $I$ is the usual Laplacian on an interval with Dirichlet boundary conditions on all the intervals. The eigenvalues of this Laplacian are all of the form 
\begin{equation*}
	\lambda_{i,k} = \frac{\pi^{2}k^{2}}{l_i^{2}}
\end{equation*}
with multiplicity $m_i$. What choices of $F_i$, $B_i$, and $G_i$ can be made to create a connected fractal with the same spectrum as a given fractal string? As the desire is to ``stitch'' the disjoint intervals together there is no expectation for a unique canonical method. 

Declare $F_{0} = [0,l_1]$ to be equipped with Dirichlet form $(\C{E}_0,\C{F}_0)$ where $\C{F}_0 = H^{1,2}([0,l_1])$ and $\C{E}_0(u,v) = \int_0^{l_1} u'v'$. Let  $B_1 = \{0,1\}$ and $G_1 = \{1,2,\ldots , m_1\}$. Then $F_1$ will be $m_1$ copies of the unit interval with left end points identified and right end points identified.  A particular implication  of this step is that $F_0=F_1$ if and  only if $m_1=1$. We impose zero boundary conditions at the endpoints, and therefore the spectrum of the Laplacian on $F_1$ is the spectrum on $F_0=[0,l_1]$ repeated, in the sense of multiplicity, $m_1$ times. For the next step     $G_2 = \{1, 2, \ldots , m_2+1\}$, and we choose 
\begin{align*}
	B_2 = \pi_1\left( ([0,l_1-l_2] \cup \{l_1\} ) \times G_1 \right) \cup \pi_1 \left( [l_1-l_2,l_1] \times ( G_1 \setminus \{1\}) \right)
\end{align*}
This implies that  the spectrum on $F_2$ is the union of the spectrum on $F_1$ and the spectrum on $ [l_1-l_2,l_1] \sim [0,l_2]$ repeated, in the sense of multiplicity, $m_2$ times. For $i \ge 1$ we take
\begin{align*}
	B_i =& \pi_{i,1}\left( ([l_1-l_i,l_1] \cup \{l_1\} ) \times G_i \right)\\
	& \cup \pi_{i,1}\left( [l_1-l_i,l_1] \times (G_1 \times \cdots \times G_i \setminus \{1, \ldots , 1\} \right).
\end{align*}
Where $G_j = \{1,\ldots, m_j+1\}$ for all $j \ge 2$. Recall the definition of $\pi_{i,1}$ from the proof of Theorem \ref{thm:InftyRegularity}. This construction is in a sense a non-self-similar version of the nested fractal construction. It is also somewhat similar to construction of some of the so called diamond fractals, see \cite{ADT,NT}.

In this setting Corollary \ref{cor:specdiscrete} holds since $\Delta_{\infty}|_{\F{D}_i'}$ consists of the eigenvalues for eigenfunctions present on $F_i$ but not on $F_{i-1}$. By construction these new eigenvalues are precisely the spectrum of the standard Laplacian on an interval of length $l_i$ with multiplicity $m_i$. The conclusion drawn from Corollary \ref{cor:specdiscrete} is that our construction does not introduce any new elements to the spectrum so the original fractal string and $F_{\infty}$ are isospectral. 

\section*{Acknowledgments}
The authors thank Eric Akkermans, Gerald Dunne, Patricia Alonso-Ruiz, Michel Lapidus and Jean Bellisard for many useful conversations and Jean Bellisard especially for the name and motivation for Sierpinski P\^ate \`a Choux example.
%

\def\cprime{$'$}

\end{document}